\newtheorem{theorem}{Theorem}
 \newtheorem{definition}{Definition}
 \newtheorem{lemma}{Lemma}
\newcommand{\Reg}{\Lambda}
\newcommand{\ud}{\mathrm{d}}
\def \N {\mathbb{N}}
\def \R {\mathbb{R}}
\def \ZnN {{0 \leq n < N}}
\providecommand{\Span}{\mathop{\mathrm{Span}}}
\providecommand{\fcont}{f}
\providecommand{\gcont}{g}
\providecommand{\Fcont}{F}
\providecommand{\Xcont}{X}
\providecommand{\Wcont}{W}
\providecommand{\cMcont}{\mathcal{M}}
\providecommand{\cCcont}{\mathcal{C}}
\providecommand{\cBcont}{\mathcal{B}}
\providecommand{\cDcont}{\mathcal{D}}
\providecommand{\Vcont}{V}
\providecommand{\cBcontTX}{\cBcont,\Xcont,T}
\providecommand{\cBcontTf}{\cBcont,\fcont,T}
\providecommand{\dimMcontcBcontTX}{\dim{\cMcont_{\cBcontTX}}}
\providecommand{\dimMcontcBcontTf}{\dim{\cMcont_{\cBcontTf}}}
\providecommand{\dimMcontO}{\dim{\cMcont_0}}
\providecommand{\dimMcontI}{\dim{\cMcont_I}}
\providecommand{\dimwidehatMcont}{\dim{\widehat{\cMcont}\,\,}}
\providecommand{\setoneKN}{\{1,\ldots,K_N\}}
\providecommand{\Proba}{\mathbb{P}}
\renewcommand{\dim}[1]{\mathop{\mathrm{dim}}\left( #1 \right)}
\providecommand{\dimMcont}{\dim{\cMcont}}
\title{Bandlet Image Estimation\\with Model Selection}
\author[bordeaux]{Ch. Dossal\corref{cor}}
\ead{charles.dossal@math.u-bordeaux1.fr}
\author[paris]{E. Le
    Pennec}
\ead{lepennec@math.jussieu.fr}
\author[cmap]{S. Mallat}
\ead{mallat@cmap.polytechnique.fr}
 \address[bordeaux]{IMB / Université de Bordeaux 1\\
351, cours de la Libération\\
33405 Talence Cedex / France}
\address[paris]{LPMA / Université Paris Diderot\\
175 rue du Chevaleret\\
75013 Paris / France}
\address[cmap]{CMAP / Ecole Polytechnique\\
 91128 Palaiseau Cedex France}
\begin{document}

\begin{frontmatter}

\begin{abstract}
To estimate geometrically regular images in the white noise model and
obtain an adaptive near asymptotic minimaxity result, we consider a model selection
based bandlet
estimator. This bandlet estimator combines the best basis selection
behaviour of the model selection and the 
approximation properties of the bandlet dictionary.
We derive its  near  asymptotic minimaxity for geometrically regular images as an
example of model selection with general dictionary of orthogonal bases.
This paper is thus
also a self contained tutorial on model selection with orthogonal bases dictionary.

\end{abstract}
\begin{keyword}

model selection \sep  white noise model \sep  image estimation \sep geometrically regular
   functions \sep bandlets

\MSC 62G05

 \end{keyword}
 
\end{frontmatter}

\section{Introduction}

A model selection based bandlet estimator has been  introduced 
by \citet{peyre07:_geomet_estim_orthog_bandl_bases}
to reduce white noise added to 
images having a geometrical regularity. This estimator projects the
observations on orthogonal bandlet vectors selected in a dictionary of 
orthonormal bases. This paper shows
that the risk of this estimator
is nearly asymptotically minimax for geometrically regular images. It
is also a tutorial on estimation with general dictionary of
orthogonal bases, through model selection. 
It explains with details how to build a thresholding estimator in an
adaptively chosen ``best'' basis and analyzes 
its
performance with the model selection approach of 
\citet{barron-risk-bounds}. 

Section~\ref{sec:image-estimation} describes the statistical setting of
the white noise model, and introduces the model of 
$\mathbf{C}^\alpha$ geometrically regular images. Images in this
class, originally proposed by \citet{KorostelevTsybakov},
are, roughly,
$\mathbf{C}^\alpha$ (Hölder regularity $\alpha$) outside a set of
$\mathbf{C}^\alpha$ curves in $[0,1]^2$. 
\citet{KorostelevTsybakov} prove that
 the minimax quadratic risk over
this class,  for a Gaussian white noise of variance
$\sigma^2$, has an asymptotic decay of the order of 
$\sigma^{2\alpha/(\alpha+1)}$.
They show  that the risk of any possible estimator cannot decay 
faster than this rate uniformly
for all functions of this class and exhibit an estimator that achieves
this rate. Their estimator relies on the knowledge of the regularity
exponent $\alpha$ and on an explicit detection of the contours, and is
not stable relatively to any image blurring.
Later, 
\citet{donoho-wedglets} 
overcomes the detection issue by
replacing it with an well-posed optimization problem. Nevertheless, both use a
model of images with sharp edges which limits their
applications 
since most image edges are not strict 
discontinuities. They are blurred because of various diffraction
effects which regularize discontinuities by unknown factors. 

The model selection based bandlet estimator, which can also be
described as
a thresholding estimator
in a best bandlet basis, does not have this restriction.
It does not rely on the detection of the precise localization of an
edge but only of a looser local direction of regularity.
Furthermore,
these directions of regularity are not estimated directly but
indirectly through a best orthogonal basis search algorithm which does
not require to know the regularity parameter $\alpha$.
Section~\ref{sec:proj-estim-model} gives a 
tutorial introduction of this type of estimators for arbitrary dictionary.
This generic class of thresholding estimators in a best basis selected in
a dictionary of orthonormal bases has been already studied by
\citet{donoho-ideal-denoising} and fit into the
framework of 
\citet{barron-risk-bounds},
\citep{BirgeMassart94lecam} and \citep{massart03:_concen_inequal_model_selec_saint_flour_notes}
This (self contained) section recalls the framework of these
estimators and their theoretical performance.
For the sake of completeness, a simplified proof of the main model
selection result is given in Appendix.

Section~\ref{sec:orth-band-basis} returns to the specific setting of
image processing and
applies the results of the previous section to geometric image
estimation. The choice of the representation (the choice of the dictionary of orthogonal
bases) becomes crucial and, after a short description of the bandlet bases,
 their use is
justified. The paper is concluded with Theorem~\ref{TheoFinal} which states
the adaptive near asymptotic minimaxity of the selection model based bandlet
estimator for geometrically regular images.

\section{Image estimation}
\label{sec:image-estimation}

\subsection{White noise model and acquisition}

During the digital acquisition process, a camera
measures an analog image $\fcont$
with a filtering and sampling process, which introduces
an additive white noise. 
In this white noise model, 
the process that is  observed can be written 
\[
\ud \Xcont_x = \fcont(x) \ud x + \sigma \ud
\Wcont_x,
\]
where $\Wcont_x$ is the Wiener process and $\sigma$ is a known noise
level parameter. 
This equation means that one is able to observe a Gaussian field $\Xcont_g$
indexed by functions $g\in L^2$ of mean
$E(\Xcont_g)=\langle f,g\rangle$
and covariance 
$E\left[\Xcont_{g}\Xcont_{g'}\right] = \langle g, g' \rangle$.

This model allows to consider asymptotics over $\sigma$ of a discrete
camera measurements process. Indeed, the measurement of a camera with
$N$ pixels can be modelled as the measure of $\Xcont_{\phi_{n}}$
over a family $\phi_n$ of $N$ impulse responses of the photo-sensors. Those
measurements,
\[
\Xcont_{\phi_n} = \langle \fcont , \phi_n \rangle + \sigma\, \Wcont_{\phi_n}
\text{ for }
0 \leq n < N 
\]
where $\Wcont_g$ is a Gaussian field of zero mean and covariance
$E\left[\Wcont_{g}\Wcont_{g'}\right] = \langle g, g' \rangle$,
define a ``projection'' of our observation $\ud\Xcont$ on the space
$\Vcont_N$, spanned by the $\phi_n$, that we  denote
$P_{\Vcont_N}\Xcont$. The white noise model allows to modify the
resolution of the camera depending on the noise level.
To simplify explanations, 
in the following we suppose that $\{ \phi_n \}_{0 \leq n < N}$
is an orthogonal basis, with no loss of generality, and thus that
\[
P_{\Vcont_N}\Xcont = \sum_{i=0}^N \Xcont_{\phi_n} \phi_n.
\] 

\subsection{Minimax risk and geometrically regular images}
We study the maximum risk of estimators for images $f$ in a given
class with respect to $\sigma$.
Model classes are often derived from
classical regularity spaces ($\mathbf{C}^\alpha$ spaces, Besov
spaces,...). This does not take into account the existence of geometrically
regular structures such as edges. This paper uses a geometric
image model appropriate for edges, but not for textures, where images are
considered as piecewise regular functions with discontinuities
along regular curves in $[0,1]^2$.
This geometrical image model has been proposed by 
\citet{KorostelevTsybakov} in their
seminal work on image estimation. It is used as a
benchmark to estimate or approximate images having some kind of geometric
regularity (%
\citet{donoho-wedglets},
\citet{shukla05:_rate},...). 
An extension of this model that incorporates a blurring
kernel $h$ has been proposed by \citet{bandlets-siam}
to model the various diffraction effects.
The resulting class of images studied in this paper is the set of
$\mathbf{C}^\alpha$ geometrically regular images specified by the
following definition.

\begin{definition}
\label{def:model}
A function $\fcont\in L^2([0,1]^2)$ is $\mathbf{C}^\alpha$ geometrically regular over $[0,1]^2$ if
\begin{itemize}
\item $ \fcont=\tilde{\fcont}$ or $ \fcont=\tilde{\fcont} \star h$
with $\tilde{\fcont} \in \mathbf{C}^{\alpha}(\Reg)$ for $ \Reg = [0,1]^2 - \{\mathcal{C}_\gamma \}_{1 \leq \gamma \leq G}$,
\item  the blurring kernel $h$ is  $\mathbf{C}^\alpha$, compactly supported 
in  $[-s,s]^2$ and 
$\|h\|_{\mathbf{C}^\alpha} \leq s^{-(2+\alpha)}$,
\item the edge curves $\mathcal{C}_\gamma$ are $\mathbf{C}^\alpha$
  and do not intersect tangentially if $\alpha>1$.
\end{itemize}
\end{definition}

\subsection{Edge based estimation}

\citet{KorostelevTsybakov} have built an
estimator that is asymptotically minimax for
geometrically regular functions $f$, as long as there is no blurring
and hence that $h = \delta$.
With a detection procedure, they partition the image in regions 
where the image is either regular or which include a
 ``boundary fragment'' corresponding to the subpart of a 
single discontinuity curve. 
In each region, they use either an estimator tailored to this  ``boundary
 fragments'' or a classical kernel estimator for the regular regions.
This yields a global estimate $\Fcont$ of the image $\fcont$.
If the $\fcont$ is $\mathbf{C}^\alpha$ outside the
boundaries and if the parametrization of the curve is also
$\mathbf{C}^\alpha$ then there exists a constant $C$ such that
\[
\forall \sigma~~,~~E\left[\| \fcont -\Fcont\|^2\right] \leq
C\sigma^{\frac{2\alpha}{\alpha+1}}\quad.
\]
This rate of convergence
achieves  the asymptotic minimax rate
for uniformly $\mathbf{C}^\alpha$ functions and thus the one for
 $\mathbf{C}^\alpha$ geometrically regular functions that includes this
class. This means that sharp edges do not alter
the rate of asymptotic minimax risk. However, this estimator 
is not adaptive relatively to the Holder exponent
$\alpha$ that must be known in advance.
Furthermore, it uses an edge detection procedure that fails when the image
is blurred or when the discontinuity jumps are not sufficiently large.

\citet{donoho-wedglets}  and 
\citet{shukla05:_rate} reuse the ideas of ``boundary fragment''
under the name ``horizon model'' to construct a piecewise polynomial
approximation of images. They derive efficient estimators
optimized for  $\alpha\in[1,2]$. 
These estimators use a
recursive partition of the image domain in dyadic squares, each square
being split in two parts by an edge curve that is a straight
segment. Both optimize the recursive partition and the choice of
the straight edge segment in each dyadic square by
minimizing a global function.
This  process leads to an asymptotically minimax estimator up to a logarithmic factor which 
is adaptive relatively to the Holder exponent as long
as $\alpha\in[1,2]$.

\citet{KorostelevTsybakov} as well as 
\citet{donoho-wedglets} and \cite{shukla05:_rate}
rely on the sharpness of image edges in their estimators.
In both cases, the estimator
is chosen amongst a family of images that are discontinuous 
across parametrized edges, and these estimators
are therefore not appropriate when the image edges are
blurred. We now consider estimators that do not have this
restriction:  they project the observation on adaptive subspaces in
which blurred as well as sharp edges are well represented.
They rely on two ingredients: the existence of bases in which
geometrical images can be efficiently approximated and the existence of a
mechanism to select, from the observation, a good basis and a good subset of coefficients
onto which it suffices to project the observation to obtain a good
estimator. We focus first on the second issue.

\section{Projection Estimator and Model Selection}
\label{sec:proj-estim-model}

The projection estimators we study are decomposed in two steps. First a linear
projection reduces the dimensionality of the problem by projecting the signal
in a finite dimensional space. This first projection is typically performed
by the digital acquisition device. Then a non-linear projection estimator 
refines this projector by reprojecting the resulting finite dimensional observation in a space that is chosen depending upon this observation.
This non-linear projection is obtained with a thresholding in a best basis
selected from a dictionary of orthonormal bases. Best basis algorithms for
noise removal have been introduced by 
\citet{CoifmanWickerhauser}. As recalled by 
\citet{Candes},
their risks have already been studied by 
\citet{donoho-ideal-denoising} and are a  special case of the general
framework of model selection proposed  by 
\citet{BirgeMassart94lecam}. 
Note that 
\citet{Kolacz} have studied a similar
problem in a slightly different setting.
 We recall in this section the framework of model
selection and state a selection model theorem
(Theorem~\ref{theo:minimizationE}) that is the main statistical
tool to prove the performance on the model selection based bandlet estimator.
This section is intended as a self contained tutorial presentation
of these best basis estimators and their resulting risk
upper bounds and contains no new results. 
Nevertheless, a simple (novel) proof of the (simplified) main result is given in Appendix.

\subsection{Approximation space $\Vcont_N$ and further projection}

The first step of our estimators is a projection in a finite
dimension space $\Vcont_N$ spanned by an orthonormal family
$\{ \phi_n \}_{0 \leq n < N}$. The choice of the dimension $N$ and of
the space $\Vcont_N$ depends on the noise level $\sigma$ but should not
depend on the function $f$ to be estimated. Assume for now that
$\Vcont_N$ is fixed and thus that we observe  $P_{\Vcont_N}X$. This
observation can be decomposed into $P_{\Vcont_N} \fcont + \sigma \Wcont_{\Vcont_N}$ where 
$\Wcont_{\Vcont_N}$ is a finite dimensional white noise on $\Vcont_N$. 

Our final estimator is a reprojection of this observation $P_{\Vcont_N} \Xcont$ 
onto a subspace $\cMcont \subset \Vcont_N$ which may
(and will)
depend on the observation:
the projection based estimator $P_{\cMcont} P_{\Vcont_N} \Xcont =  P_{ \cMcont}  X$.
The overall quadratic error can be decomposed in three terms:
\[
\|\fcont - P_{ \cMcont}  \Xcont \|^2 =  \|  \fcont - P_{\Vcont_N}  \fcont \|^2  + 
\| P_{\Vcont_N} \fcont  - P_{\cMcont}  \fcont \|^2 + \sigma^2 \|
P_{\cMcont} W\|^2.
\]
The first term is a bias term corresponding to the first linear approximation error due
to the projection on  $V_N$, the second term is also a bias term which
corresponds to the non linear
approximation of $P_{\Vcont_N}\fcont$ on $\cMcont$ while the third
term is a ``variance'' term corresponding to the contribution of the noise on $\cMcont$.

The dimension $N$ of $\Vcont_N$ has to be chosen large enough so that
with high probability,
for reasonable $\cMcont$,
$\| \fcont - P_{\Vcont_N} \fcont \|^2 \leq \| P_{\Vcont_N} \fcont  -
P_{\cMcont} \fcont \|^2 + \|P_{\cMcont} \Wcont \|^2$.
From the practical point of view, this means that the acquisition device resolution is set so that
the first linear approximation error due to discretization is smaller than the 
second non linear noise related error. Engineers often set $N$ so that both terms are of the same
order of magnitude, to limit the cost in terms of storage and
computations. In our white noise setting, we will explain how to chose
$N$ depending on $\sigma$.

For a fixed $\Vcont_N$, in order to obtain a small error, we need to
balance between the two remaining terms.  A space $\cMcont$ of large
dimension may
reduce the second bias term but will increase the variance term, a
 space $\cMcont$ of small dimension does the opposite.
It is thus necessary to find 
a trade-off between these two trends,
and select a space $\cMcont$ to minimize the sum of those two terms.

\subsection{Model Selection  in a Dictionary of orthonormal bases}

We consider a (not that) specific situation in which the space $\cMcont$
is spanned by some vectors from some orthonormal bases of $\Vcont_N$. More precisely,
let $\cBcont = \{ \gcont_n \}_\ZnN$ be an orthonormal basis of
$\Vcont_N$, that may be different from $\{\phi_n\}$, we consider
space $\cMcont$ spanned by a sub-family 
$\{ \gcont_{n_k} \}_{1 \leq k \leq M}$
of $M$ vectors and the projections of our observation on those spaces
\[
P_\cMcont \Xcont = \sum_{k=1}^M \Xcont_{\gcont_{n_k}} \, \gcont_{n_k} .
\]
Note that this projection, or more precisely its decomposition in the basis
$\{\phi_n\}$,  can be computed easily from the decomposition of
$P_{\cMcont}\Xcont$ in the same basis.
 
Instead of choosing a specific single orthonormal basis $\cBcont$, we define
a dictionary $\cDcont_N$ which is a collection of orthonormal bases
in which we choose adaptively the basis used.
Note that some  bases of $\cDcont_N$ may have vectors in common.
This dictionary can thus also be viewed as set $\{ \gcont_n \}$ of $K_N \geq N$
different vectors,
that are regrouped to form many
different  orthonormal bases. 
Any collection of $M$ vectors from the same orthogonal basis $\cBcont \in \cDcont_N$
generates a space $\cMcont$ that defines a possible estimator
$P_\cMcont \Xcont$ of $\fcont$.
Let $\cCcont_N = \{ \cMcont_\gamma \}_{\Gamma_N}$ be the family of all such 
projection spaces. Ideally we
would like to find the space $\cMcont \in\cCcont_N
$ which minimizes $\|\fcont - P_\cMcont \Xcont\|$.
We want thus to choose a ``best'' model $\cMcont$ amongst a collection
that is we want to perform a model selection task.

\subsection{Oracle Model}

As a projection estimator yields an estimation error
\[
\|\fcont - P_{\cMcont} \Xcont \|^2  = \|f-P_{\Vcont_N}\|^2 +\|
P_{\Vcont_N}  - P_{\cMcont} \fcont \|^2 + \| P_{\cMcont} \Wcont \|^2  
=  \|f  - P_{\cMcont} \fcont \|^2 + \| P_{\cMcont} \Wcont \|^2  
,
\]
the expected error of such an estimator is given by
\[
E\left[\|\fcont - P_{\cMcont} \Xcont \|^2\right]  = 
\|f  - P_{\cMcont} \fcont \|^2 + 
\sigma^2 \dimMcont.
\]
The best subspace for this criterion is the one that realizes the best trade-off
between the approximation error  $\|f - P_{\cMcont} \fcont \|^2$
and the complexity of the models measured by $\sigma^2 \dimMcont$. 

This expected error cannot be computed in practise since we have a single
realization of $\ud\Xcont$ (or of $P_{\Vcont_N}\Xcont$) . To (re)derive the classical model selection
procedure of \citet{BirgeMassart94lecam},
 we first
slightly modify our problem by 
searching for a subspace
$\cMcont$ such that the estimation error obtained by projecting $P_{\Vcont_N}\Xcont$ on
this subspace is small with only an overwhelming probability.
As in all model selection papers, we  use an upper bound 
 of the estimation error  obtained from
an upper bound of the energy of the noise projected on $\cMcont$.
Each of the $K_N$ projections of the noise on the $K_N$ different
vectors in the bases of the dictionary $\cDcont_N$ is thus
 $ \Wcont_{\gcont_k} \gcont_k$.  Its law is 
a Gaussian random variable of variance $\sigma^2$ along the vector $\gcont_k$.  
A standard large deviation result
proves that the norms of $K_N$ 
such Gaussian random variables 
are bounded simultaneously by $T = \sigma \sqrt {2 \log K_N}$ with 
a probability that tends to $1$ when $N$ increases. 
Since 
the noise energy projected in $\cMcont$
is the sum of $\dimMcont$ squared dictionary noise coefficients, we get
$\| P_{\cMcont} \Wcont \|^2  \leq \dimMcont\,T^2$.
It results that
\begin{equation}
\label{erroup}
\|\fcont - P_{\cMcont} \Xcont \|^2  \leq \|  \fcont  - P_{\cMcont} \fcont \|^2 + \dimMcont\,T^2 .
\end{equation}
over all subspaces $\cMcont$
with a probability that tends to $1$ as $N$ increases.
The estimation error is small if $\cMcont$ is 
a space of small dimension $\dimMcont$
which yields a small approximation error $\|\fcont - P_{\cMcont} \fcont \|$.
We denote by $\cMcont_O \in \cCcont_N$ the space that minimizes
the estimation error upper bound (\ref{erroup})
\[
\cMcont_O = \arg \min _{\cMcont \in \cCcont_N} (\|  \fcont  - P_{\cMcont} \fcont \|^2 + \dimMcont\,T^2) .
\]
Note that this optimal space cannot be determined from the observation
$\Xcont$ since $\fcont$ is unknown. It is called the oracle space
, hence the $O$ in the notation, to remind this fact.

\subsection{Penalized empirical error}

To obtain an estimator, it is thus necessary to replace this oracle space by a
``best'' space obtained only from the observation $P_{\Vcont_N}\Xcont$ that 
yields  (hopefully) a small estimation error.
A first step toward this goal is to notice that since all the spaces
$\cMcont$ are included into $\Vcont_N$, minimizing
\[
\| \fcont  - P_{\cMcont} \fcont \|^2 + \dimMcont\,T^2
\]
is equivalent to minimizing
\[
\| P_{\Vcont_N}\fcont  - P_{\cMcont} \fcont \|^2 + \dimMcont\,T^2
\].
A second step is to consider
the crude estimation of $\|  P_{\Vcont_N}\fcont  - P_{\cMcont} \fcont \|^2$  given 
by the empirical norm 
\[
\|  P_{\Vcont_N}\Xcont  - P_{\cMcont} \Xcont \|^2 = \| P_{\Vcont_N} \Xcont  \|^2 - \|P_{\cMcont} \Xcont \|^2 .
\]
This may seem naive because estimating 
$\| P_{\Vcont_N} \fcont  - P_{\cMcont} \fcont \|^2$ with $\|  P_{\Vcont_N}\Xcont  -
P_{\cMcont} \Xcont \|^2$ yields a large error
\[
\|  P_{\Vcont_N}\Xcont  - P_{\cMcont} \Xcont \|^2  -  \|  P_{\Vcont_N}\fcont  - P_{\cMcont} \fcont \|^2  = 
(\|  P_{\Vcont_N}\Xcont  \|^2 - \|P_{\Vcont_N}\fcont \|^2) + (\|P_{\cMcont} \fcont \|^2 - \|P_{\cMcont} \Xcont \|^2) ,
\]
whose expected value is $(N-\dimMcont) \sigma^2$, with typically $\dimMcont \ll N$. 
However, most of this
error is in the first term on the right hand-side, which has no effect
on the choice of space $\cMcont$. This choice
depends only upon the second term and is thus only influenced by 
noise projected in the space $\cMcont$ of lower dimension $\dimMcont$.
The bias and the fluctuation of this term, and thus the choice of
the basis, are controlled by increasing the parameter $T$.

We define the best empirical projection estimator $P_{\widehat \cMcont}$ as the
estimator that minimizes the resulting empirical penalized risk:
\begin{equation}
\label{bestEstim}
\widehat \cMcont = \arg \min _{\cMcont \in \cCcont_N} \|  P_{\Vcont_N}\Xcont  - P_{\cMcont} \Xcont \|^2 + \dimMcont\,T^2
\end{equation}

\subsection{Thresholding in a best basis}
\label{sec:thresh-best-basis}

Finding the best estimator which minimizes (\ref{bestEstim}) may seem
computationally untractable because the number of possible spaces $\cMcont \in \cCcont$
is typically an exponential function of the number $K_N$ of vectors in $\cDcont_N$.
We show that this best estimator may however be found with a thresholding
in a best basis.

Suppose that we impose that $\cMcont$ are generated by a subset of vectors from
a basis $\cBcont \in \cDcont_N$. The following (classical) lemma proves that among
all such spaces, the best projection estimator is obtained with 
a thresholding at $T$.

\begin{lemma}
Among all spaces $\cMcont$ that are generated by a subset
 of vectors
of an orthonormal basis $\cBcont = \{ \gcont_n \}_\ZnN$ of $\Vcont_N$, the estimator which
minimizes $\|  P_{\Vcont_N} \Xcont  - P_{\cMcont} \Xcont \|^2 + \dimMcont\,T^2$ is the thresholding estimator
\begin{equation}
\label{threEstma}
P_{\cMcont_{\cBcontTX}} \Xcont =  \sum_{n, |\langle \Xcont , \gcont_n \rangle| > T} \langle \Xcont , \gcont_n \rangle\, \gcont_n .
\end{equation}
\end{lemma}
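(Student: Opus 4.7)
The plan is to exploit the orthonormality of $\cBcont$ so that both the empirical error and the dimension decompose coordinate by coordinate, which reduces the combinatorial minimization over subsets to $N$ independent scalar decisions.

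First, I would parametrize the admissible spaces: any $\cMcont$ generated by a subset of $\cBcont$ corresponds to an index set $I \subset \{0,\ldots,N-1\}$, with $\cMcont = \Span\{g_n\}_{n \in I}$ and $\dim{\cMcont} = |I|$. Using that $\cBcont$ is an orthonormal basis of $\Vcont_N$, I write $P_{\Vcont_N}\Xcont = \sum_{n} \langle \Xcont, g_n\rangle g_n$ and $P_{\cMcont}\Xcont = \sum_{n \in I} \langle \Xcont, g_n\rangle g_n$. By Parseval,
\[
\|P_{\Vcont_N}\Xcont - P_{\cMcont}\Xcont\|^2 = \sum_{n \notin I} |\langle \Xcont, g_n\rangle|^2.
\]
The penalized criterion therefore becomes
\[
\sum_{n \notin I} |\langle \Xcont, g_n\rangle|^2 + |I|\, T^2 = \sum_{n=0}^{N-1} \bigl( \mathbf{1}_{n \notin I}\, |\langle \Xcont, g_n\rangle|^2 + \mathbf{1}_{n \in I}\, T^2 \bigr).
\]

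Second, since the summands are independent across $n$, the overall minimum is obtained by minimizing each term separately: for each $n$, the choice is between paying $|\langle \Xcont, g_n\rangle|^2$ (excluding $n$) or $T^2$ (including $n$). The minimum is $\min(|\langle \Xcont, g_n\rangle|^2, T^2)$, and it is attained by including $n$ in $I$ exactly when $|\langle \Xcont, g_n\rangle|^2 > T^2$, i.e.\ when $|\langle \Xcont, g_n\rangle| > T$ (the tie case $|\langle \Xcont, g_n\rangle| = T$ is inconsequential and can be assigned either way). This selection rule produces precisely the thresholding estimator~(\ref{threEstma}), finishing the proof.

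There is no genuine obstacle here: the whole argument rests on the fact that in an orthonormal basis the squared error splits as a sum over coordinates and the penalty $\dim{\cMcont}\,T^2$ also splits as a sum of $T^2$'s over the same index set, so that the combinatorial optimization over $2^N$ subsets collapses to $N$ independent pointwise comparisons. The only point to check carefully is that the restriction ``$\cMcont$ generated by a subset of $\cBcont$'' is really what lets us avoid optimizing over arbitrary subspaces of $\Vcont_N$; the statement does not claim global optimality across all subspaces, only across those aligned with the basis, which is exactly what the coordinatewise argument handles.
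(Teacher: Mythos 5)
Your proof is correct and follows essentially the same route as the paper's: Parseval in the orthonormal basis $\cBcont$ splits both the empirical error and the penalty coordinatewise, and the subset minimization collapses to $N$ independent comparisons of $|\langle \Xcont, \gcont_n\rangle|^2$ against $T^2$. You simply spell out the steps that the paper's one-line proof leaves implicit.
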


\begin{proof}
Let $\cMcont=\Span\{\gcont_n\}_{n\in I}$ with $I\subset[0,N)$,
as $\cBcont$ is an orthonormal basis,
\begin{align*}
\|  \Xcont  - P_{\cMcont} \Xcont \|^2 + \dimMcont\,T^2
& = \sum_{n \notin I} |\langle \Xcont , \gcont_m\rangle|^2 + \sum_{n
  \in I} T^2 
\end{align*}
which is minimal if $I=\{n, |\langle \Xcont , \gcont_n\rangle|^2 > T^2\}$.
\end{proof}

The thresholding estimator (\ref{threEstma}) projects $\Xcont$ 
in the space $\cMcont_{\cBcontTX}$ generated by the vectors $\{
\gcont_m \}_{|\langle \Xcont , \gcont_m \rangle| > T}$, the vectors of $\cBcont$ which produce
coefficients above threshold. This lemma implies that best projection
estimators are necessarily thresholding estimators in some basis. 
Minimizing $\|  P_{\Vcont_N}\Xcont  - P_{\cMcont} \Xcont \|^2 + \dimMcont\,T^2$ over $\cMcont \in \cCcont$ is thus
equivalent to find the basis $\widehat \cBcont$ of $\Vcont_N$ which minimizes the thresholding 
penalized empirical risk:
\[
\widehat \cBcont =  \arg \min_{\cBcont \in \cDcont_N} \|  P_{\Vcont_N}\Xcont  - P_{\cMcont_{\cBcontTX}} \Xcont \|^2 + \dimMcont\,T^2  .
\] 
The best space which minimizes the empirical penalized 
risk in (\ref{bestEstim}) is derived from a thresholding in the best basis
$\widehat \cMcont = \cMcont_{\widehat{\cBcont},T}$.

The following theorem, similar to the one obtained first by
\citet{barron-risk-bounds}, 
proves that the thresholding estimation error in the best basis
is bounded by the estimation error by projecting in the oracle space $\cMcont_O$, 
up to a multiplicative factor.

\begin{theorem}
\label{theo:minimizationE} There exists an absolute function
$\lambda_0(K)\geq \sqrt{2}$ and some absolute constants $\epsilon>0$ and $\kappa>0$ such that
if we denote $\cCcont_N = \{ \cMcont_\gamma \}_\Gamma$ 
 the family of projection spaces generated by some vectors in an orthogonal
basis of a dictionary $\cDcont_N$ and denote $K_N$ be the number of 
different vectors in $\cDcont_N$. Then
for any $\sigma > 0$, if we let 
$T = \lambda\, \sqrt{\log(K_N)}\,\sigma$ with $\lambda\geq \lambda_0(K_N)$,
then
for any  $\fcont\in L^2$,  the thresholding estimator $F=P_{\cMcont_{\widehat{\cBcont},X,T}} \Xcont$ in the best basis
\[
\widehat \cBcont =  \arg \min_{\cBcont \in \cDcont_N} \|  P_{\Vcont_N}\Xcont  -
P_{\cMcont_{\cBcontTX}} \Xcont \|^2 + \dimMcontcBcontTX \,T^2 
\]
satisfies
\begin{equation*}\label{BestModelE}
E\left[\|\fcont - F\|^2\right] \leq (1+\epsilon) 
\left( \min_{\cMcont \in \cCcont_N} 
\| \fcont -P_{\cMcont}\fcont\|^2+ \dimMcont\,T^2 \right)
+ \frac{\kappa}{K_N}\sigma^2 .
\end{equation*}
\end{theorem}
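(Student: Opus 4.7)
The plan is to follow the classical Barron--Birgé--Massart argument, in the simplified form adapted to orthonormal projections in a dictionary. The whole analysis takes place inside $\Vcont_N$.

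\textbf{Step 1 (orthogonal reduction).} I would first introduce $\fcont_N = P_{\Vcont_N}\fcont$ and $\Wcont_N = P_{\Vcont_N}\Wcont$, so that $P_{\Vcont_N}\Xcont = \fcont_N + \sigma \Wcont_N$ and $\Wcont_N$ is a standard Gaussian on $\Vcont_N$. For every $\cMcont \subset \Vcont_N$, orthogonality of $\fcont-\fcont_N$ to $\Vcont_N$ yields
\[
\|\fcont - P_{\cMcont}\Xcont\|^2 = \|\fcont - \fcont_N\|^2 + A_{\cMcont} + \sigma^2 B_{\cMcont},
\qquad A_{\cMcont} := \|(I-P_{\cMcont})\fcont_N\|^2,\quad B_{\cMcont} := \|P_{\cMcont}\Wcont_N\|^2,
\]
and likewise $\|\fcont-P_{\cMcont}\fcont\|^2 = \|\fcont-\fcont_N\|^2 + A_{\cMcont}$. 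So the theorem reduces to proving $E[A_{\widehat\cMcont} + \sigma^2 B_{\widehat\cMcont}] \leq (1+\epsilon)\inf_{\cMcont\in\cCcont_N}\bigl(A_{\cMcont} + \dim{\cMcont}\,T^2\bigr) + \kappa\sigma^2/K_N$.

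\textbf{Step 2 (defining inequality).} Expanding the penalized empirical criterion with $\|P_{\Vcont_N}\Xcont - P_{\cMcont}\Xcont\|^2 = \|\Xcont\|^2 - \|P_{\cMcont}\Xcont\|^2$ and $\|P_{\cMcont}\Xcont\|^2 = \|\fcont_N\|^2 - A_{\cMcont} + 2\sigma\langle P_{\cMcont}\fcont_N,\Wcont_N\rangle + \sigma^2 B_{\cMcont}$, the optimality of $\widehat\cMcont$ becomes, after cancelling constants, for every $\cMcont\in\cCcont_N$,
\[
A_{\widehat\cMcont} + \sigma^2 B_{\widehat\cMcont} + \dim{\widehat\cMcont}\,T^2
\;\leq\; A_{\cMcont} + \dim{\cMcont}\,T^2
+ 2\sigma \bigl\langle (P_{\widehat\cMcont} - P_{\cMcont})\fcont_N,\,\Wcont_N\bigr\rangle
+ 2\sigma^2 B_{\widehat\cMcont} - \sigma^2 B_{\cMcont}.
\]
The whole issue is therefore to dominate the last three terms by a small multiple of $A_{\cMcont}+\dim{\cMcont}\,T^2$ plus $\dim{\widehat\cMcont}\,T^2$, with a controllable residual.

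\textbf{Step 3 (uniform concentration and absorption).} I would introduce the ``good event'' $\Omega = \{\max_{1\leq k\leq K_N} \sigma|\langle\Wcont_N,\gcont_k\rangle| \leq \eta T\}$ for some $\eta<1$. A union bound over the $K_N$ standard Gaussians $\langle\Wcont_N,\gcont_k\rangle$ gives $\Proba(\Omega^c) \lesssim K_N^{1-\lambda^2\eta^2/2}$, which can be made as small as $K_N^{-c}$ for any preset $c$ by fixing $\lambda_0(K_N)$ large enough. On $\Omega$, since $\widehat\cMcont$ is spanned by dictionary vectors, $\sigma^2 B_{\widehat\cMcont} \leq \eta^2\dim{\widehat\cMcont}\,T^2$; choosing $\eta^2 < \tfrac12$ makes $2\sigma^2 B_{\widehat\cMcont} - \dim{\widehat\cMcont}\,T^2$ negative. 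The cross term is handled by Cauchy--Schwarz: $(P_{\widehat\cMcont}-P_{\cMcont})\fcont_N$ lies in a subspace spanned by the $\dim{\widehat\cMcont}+\dim{\cMcont}$ dictionary vectors of $\widehat\cMcont\cup\cMcont$, so on $\Omega$ its inner product with $\sigma\Wcont_N$ is at most $\eta T\sqrt{\dim{\widehat\cMcont}+\dim{\cMcont}}\,\|(P_{\widehat\cMcont}-P_{\cMcont})\fcont_N\|$. The Young inequality $2ab\leq \rho a^2 + \rho^{-1}b^2$ and the triangle inequality on $A_{\cMcont}$ then absorb this into $\rho(A_{\widehat\cMcont} + A_{\cMcont})$ plus a constant multiple of $(\dim{\widehat\cMcont}+\dim{\cMcont})T^2$. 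Picking $\lambda$ (through $\eta$) and $\rho$ small enough allows the $A_{\widehat\cMcont}$ piece to be subtracted back from the left-hand side, yielding the oracle inequality with a $(1+\epsilon)$ factor.

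\textbf{Step 4 (residual on $\Omega^c$).} This is where I expect the main obstacle: on $\Omega^c$, $B_{\widehat\cMcont}$ may be as large as $\|\Wcont_N\|^2$, whose expectation is of order $N$, and $A_{\widehat\cMcont}\leq\|\fcont_N\|^2$ is not controlled by the oracle. I would use Cauchy--Schwarz, $E[(A_{\widehat\cMcont}+\sigma^2 B_{\widehat\cMcont})\mathbb{1}_{\Omega^c}] \leq \sqrt{E[(A_{\widehat\cMcont}+\sigma^2 B_{\widehat\cMcont})^2]}\sqrt{\Proba(\Omega^c)}$, together with a rough moment bound using $A_{\widehat\cMcont}\leq\|\fcont_N\|^2$ and $B_{\widehat\cMcont}\leq\|\Wcont_N\|^2$ (whose fourth moment is polynomial in $N$). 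The delicate calibration is then to pick $\lambda_0(K_N)$ large enough that $\sqrt{\Proba(\Omega^c)}$ dominates the polynomial moment factors and yields the stated $\kappa\sigma^2/K_N$ remainder; this is precisely what forces $\lambda_0$ to depend on $K$ rather than being an absolute constant.
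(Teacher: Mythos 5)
Your overall architecture (a basis inequality from the optimality of $\widehat{\cMcont}$, absorption of the cross term by Young's inequality, separate treatment of a bad event) is the right family of argument, but two steps do not go through as written, and both are exactly the points where the paper's proof uses a different device.

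First, in Step 3 your good event $\Omega$ only controls the $K_N$ individual coordinates $\sigma|\langle \Wcont_N,\gcont_k\rangle|$. That does suffice for $\sigma^2 B_{\widehat{\cMcont}}\leq \eta^2 \dim{\widehat{\cMcont}}\,T^2$, because $\widehat{\cMcont}$ is spanned by an \emph{orthonormal} subfamily of a single basis. But your cross-term bound requires controlling the noise on $\widehat{\cMcont}+\cMcont$, which is spanned by the union of vectors taken from two \emph{different} bases of the dictionary; this union is not orthonormal, and a coordinatewise bound does not control $\|P_{\widehat{\cMcont}+\cMcont}\Wcont_N\|$: two unit vectors at a small angle span a plane on which the projection of $\Wcont_N$ can be arbitrarily large while both inner products stay below $\eta T/\sigma$. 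Hence the inequality $\langle (P_{\widehat{\cMcont}}-P_{\cMcont})\fcont_N,\sigma\Wcont_N\rangle \leq \eta T\sqrt{\dim{\widehat{\cMcont}}+\dim{\cMcont}}\;\|(P_{\widehat{\cMcont}}-P_{\cMcont})\fcont_N\|$ is unjustified. The paper's Lemma~\ref{lem:concentration} exists precisely to fill this hole: by Tsirelson's concentration of the $1$-Lipschitz map $w\mapsto\|P_{\cMcont_I}w\|$ and a union bound over \emph{all} subsets $I$ of the $K_N$ dictionary vectors (each subset costing only $K_N^{-2\dim{\cMcont_I}}e^{-u}$), it bounds $\|P_{\cMcont_I}\Wcont\|$ by $\sqrt{\dim{\cMcont_I}}+\sqrt{4\log(K_N)\dim{\cMcont_I}+2u}$ simultaneously, including for spans of non-orthonormal unions such as $\widehat{\cMcont}+\cMcont_O$.

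Second, Step 4 cannot deliver the stated remainder $\kappa\sigma^2/K_N$. Your Cauchy--Schwarz bound on $\Omega^c$ leaves a term of order $\bigl(\|\fcont_N\|^2+\sigma^2\sqrt{E[\|\Wcont_N\|^4]}\bigr)\sqrt{\Proba(\Omega^c)}$; the factor $\|\fcont_N\|^2$ depends on $f$, whereas $\kappa$ must be an absolute constant and the bound must hold for every $f\in L^2$, so no choice of $\lambda_0(K_N)$ makes this uniform. The paper avoids a binary good/bad split altogether: its concentration lemma holds at every deviation level $u\geq 0$ with failure probability $\tfrac{2}{K_N}e^{-u}$, the resulting oracle inequality carries an additive $32\sigma^2 u$ on its right-hand side for each $u$, and integrating this tail in $u$ yields the residual $64\sigma^2/K_N$ with no dependence on $f$ or $N$. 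To repair your proof you would need to replace the coordinatewise event by a subset-uniform chi-square/Tsirelson bound and the bad-event moment argument by this integrated-tail device.
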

For the sake of completion,
we  propose in Appendix a simple proof of
Theorem~\ref{theo:minimizationE}, inspired by 
\citet{BirgeMassart94lecam},
 which requires  only 
a concentration lemma for the norm of the noise
in all the subspaces spanned by the $K_N$ generators of
$\cDcont_N$ but with worse constants:
$\lambda_0(K) = \sqrt{32+\frac{8}{\log(K)}}$,
$\epsilon=3$ and $\kappa=64$.
Note this Theorem can be deduced from
 \citet{massart03:_concen_inequal_model_selec_saint_flour_notes}
 with different (better) constant
(and for roughly
$\lambda_0(K)>\sqrt{2}$) using a more complex proof
based on subtle Talagrand's inequalities. 
It results that any bound on
\(
\min _{\cMcont \in \cCcont_N} \|  \fcont  - P_{\cMcont} \fcont \|^2 + \dimMcont\,T^2 ,
\)
gives a bound on the risk of the best basis estimator $F$.

To obtain a computational estimator, the minimization
\[
\widehat \cBcont =  \arg \min_{\cBcont \in \cDcont_N} \|  P_{\Vcont_N}
\Xcont  - P_{\cMcont_{\cBcontTX}} \Xcont \|^2 + \dimMcontcBcontTX\,T^2 \quad,
\]
should be performed with a number of operations typically proportional
to the number $K_N$ of vectors in the dictionary. This requires to
construct appropriate dictionaries of orthogonal bases. 
Examples of such dictionaries have been proposed by 
\citet{CoifmanWickerhauser} 
with wavelet packets or by 
\citet{Coifman91remarques}
 with
local cosine bases for signals having localized time-frequency structures. 
Next section reviews some possible dictionaries for images and 
recall the construction of
the dictionary of bandlet orthogonal
bases that is adapted to the estimation of geometrically regular images.

\section{Best basis image estimation and bandlets}
\label{sec:orth-band-basis}

\subsection{Thresholding in a single basis}

When the dictionary $\cDcont_N$
is reduced to a single basis $\cBcont$, and there is thus no basis
choice, Theorem~\ref{theo:minimizationE} clearly applies  and reduces to the classical
thresholding Theorem of \citet{donoho-shrinkage}. The corresponding
estimator is thus the
classical thresholding estimator which quadratic risk satisfies
\begin{align*}
E\left[\|\fcont - P_{\cMcont_{\cBcontTX}} \Xcont\|^2\right] &\leq (1+\epsilon) 
\left( \min_{\cMcont \in \cCcont_N} 
\| \fcont -P_{\cMcont}\fcont\|^2+ \dimMcont\,T^2 \right)
+ \frac{\kappa}{N}\sigma^2
\end{align*}
It remains ``only'' to choose which basis to use and how to define the space  $\Vcont_N$ with respect
to $\sigma$.

Wavelet bases provide a first family of estimators used commonly in
image processing.
Such a two dimensional wavelet basis is constructed from
two real functions, 
a one dimensional  wavelet
$\psi$ and a corresponding one dimensional scaling function $\phi$,
which are both dilated
and  translated:
\[
 \psi_{j,k}(x) = \frac{1}{2^{j/2}} \psi\left(\frac{x-2^jk}{2^j}
\right)
\text{  and  }
 \phi_{j,k}(x) = \frac{1}{2^{j/2}} \phi\left(\frac{x-2^jk}{2^j}
\right)\quad.
\]
Note that the index $j$ goes to $-\infty$ when the
wavelet scale $2^j$ decreases. For a suitable choice of $\psi$ and $\phi$,
the family $\{ \psi_{j,k}(x)\}_{j,k}$ is an orthogonal basis of
$L^2([0,1])$ and the following family constructed by tensorization
\begin{align*}
\label{2dwavebs}
\left\{ 
\begin{array}{ccc}
\psi_{j,k}^V(x)=\psi_{j,k}^V(x_1,x_2)=
\phi_{j,k_1} (x_1)\,\psi_{j,k_2} (x_2),\\
\psi_{j,k}^H(x)=\psi_{j,k}^H(x_1,x_2)=\psi_{j,k_1} (x_1)\,\phi_{j,k_2} (x_2),\\
\psi_{j,k}^D(x)=\psi_{j,k}^D(x_1,x_2)=\psi_{j,k_1} (x_1)\,\psi_{j,k_2} (x_2) 
\end{array}
\right\}_{(j,k_1,k_2) 
}
\end{align*}
is an orthonormal basis of the square $[0,1]^2$.
Furthermore, each space 
\[
V_j=\Span\{\phi_{j,k_1}(x_1)\phi_{j,k_2}(x_2)\}_{k_1,k_2},
\]
called approximation space of scale $2^j$, admits
$\{ \psi_{l,k}^o\}_{o,l\geq j,k_1,k_2}$ as an orthogonal basis.
The approximation space $\Vcont_N$ of the previous section
coincides with the classical wavelet approximation space 
$V_j$ when $N=2^{-j/2}$. 

A classical approximation result ensures that for any function $\fcont$
$\mathbf{C}^\alpha$, as soon as the wavelet has more than $\lfloor
\alpha \rfloor+1$ vanishing moments, there is a constant $C$ such that, for any $T$,
$\min_{\cMcont \in \cCcont_N} 
\| P_{\Vcont_N}\fcont -P_{\cMcont}\fcont\|^2+ \dimMcont\,T^2 \leq C (T^2)^{\frac{\alpha}{\alpha+1}}$, 
and, for any $N$, $\|P_{V_N}\fcont - \fcont\|^2 \leq C N^{-\alpha}$. 
For $N=2^{-j/2}$ with $\sigma^2=[2^j,2^{j+1}]$, 
Theorem~\ref{theo:minimizationE} thus implies
\[
E[\|\fcont-\Fcont\|^2] \leq C (|\log(\sigma)|\sigma^2)^{\frac{\alpha}{\alpha+1}}~.
\]
This is up to the logarithmic term the best possible rate for
$\mathbf{C}^\alpha$ functions. Unfortunately, wavelets bases do not
provides such an optimal representation for the $\mathbf{C}^\alpha$
geometrically regular functions specified by Definition \ref{def:model}. 
Wavelets fail to capture the geometrical regularity of edges: near
them, the wavelets coefficients remain large. As explained in
\citet{MallatBook},
by noticing that those edges contribute at scale $2^j$ to $O(2^{-j})$
coefficients of order $O(2^{j/2})$,
one verifies that the rate of convergence in a wavelet basis decays like
$(|\log(\sigma)|\sigma^2)^{1/2}$, which is far from the asymptotical minimax rate.

A remarkably efficient representation was introduced 
by 
\citet{CurvSurprisingly}.
Their curvelets  are not isotropic like wavelets but are 
more elongated along a preferential direction and have two vanishing
moments along this direction. 
They are dilated and translated like wavelets but they are also rotated. 
The resulting family of 
curvelets $\mathcal{C}=\{c_n\}_n$ is not a basis of $L^2([0,1]^2)$ but 
a tight frame of $L^2(\R^2)$. This implies, nevertheless, that for any $f\in L^2([0,1]^2)$
\[
\sum_{c_n\in\mathcal{C}} |\langle f,c_n \rangle|^2 = A \|f\|^2\quad
\text{with $A>1$}.
\]
Although this is not an orthonormal basis, the results of
Section~\ref{sec:proj-estim-model} can be extended to this setting.
Projecting the data on the first $N=\sigma^{-1/2}$  curvelets with
significant intersection with the unit square
and thresholding the remaining coefficients with a threshold
$\lambda\sqrt{\log{N}}\sigma$ yields an estimator $F$ that satisfies
\[
E\left[\|\fcont-\Fcont\|^2\right] \leq C (|\log \sigma| \sigma^2)^{\frac{\alpha}{\alpha+1}}
\]
with a constant $C$ that depends only on $f$. This is the optimal
decay rate for the risk up to the logarithmic factor for $\alpha\in[1,2]$. 
No such fixed representation is known to achieve a similar result
for $\alpha$ larger than $2$.

\subsection{Dictionary of orthogonal bandlet bases}

To cope with a geometric regularity of order $\alpha>2$, one needs
basis elements which are more anisotropic than  the curvelets, are more
adapted to the geometry of edges and have more vanishing moments in
the direction of regularity.
Bandlet bases\citep{bandlets-ieee,bandlets-siam,peyre-bandlets-theo}
are orthogonal bases whose elements have such
properties.
Their construction is based on the observation that even if the
wavelet coefficients are large in the neighbourhood of an edge, these
wavelets coefficients are regular along the direction of the edge as
illustrated by Fig~\ref{fig1}.

\begin{figure}
\begin{center}
\includegraphics[width=\textwidth]{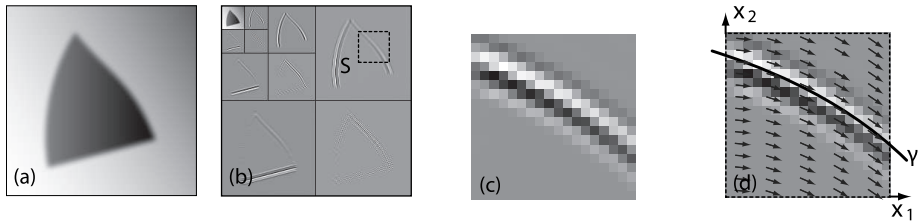}
\caption{
a) a geometrically regular image, 
b) the associated wavelet coefficients,
c) a close-up of wavelet coefficients in a detail space $W_j^o$ that shows their
remaining regularity, 
d) the geometrical flow adapted to this square of coefficients, here
it is vertically constant and parametrized by a polynomial curve $\gamma$}
\label{fig1}
\end{center}
\end{figure}
To capture this geometric regularity, 
the key tool is a local orthogonal
transform, inspired by the work of \citet{alpert-discrete}, that combines locally the wavelets
along the direction of regularity, represented by arrows in the
rightmost image of Fig~\ref{fig1}), to produce a new
orthogonal basis, a bandlet basis.
 By construction, the bandlets
are  elongated along the direction of
regularity and have the vanishing moments along this direction.
The (possibly large) wavelets coefficients are thus locally recombined along this
direction, yielding more coefficients of small amplitudes than before.

\begin{figure}
\begin{center}
\includegraphics[width=\textwidth]{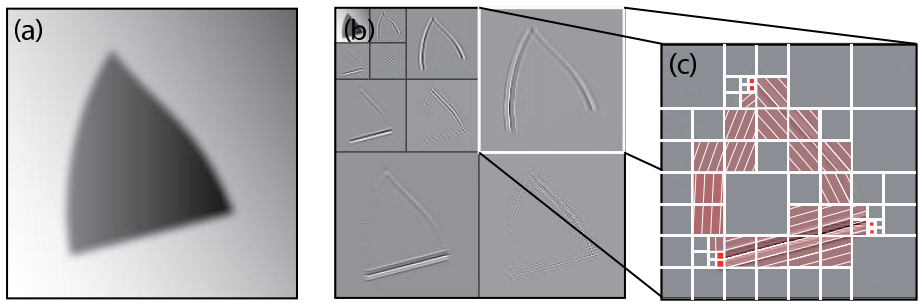}

\caption{
a) a geometrically regular image
b) the corresponding wavelet coefficients
c) the quadtree associated to the segmentation of a detail space $W_j^o$. In
each square where the image is not uniformly regular, the flow
is shown.}
\label{fig2}
\end{center}
\end{figure}

More precisely,
the construction of a bandlet basis 
of a  wavelet multiresolution space
$V_j=\Span \{ \phi_{j,k_1,k_2} \}_{k_1,k_2}$ 
starts by decomposing this space into detail wavelet spaces
\[
V_j = \bigoplus_{o,l>j} W_{l}^o~~~\mbox{with}~~~W_{l}^o = \Span \{
\psi_{l,k_1,k_2}^o\}_{k_1,k_2}~.
\]
For any level $l$ and orientation $o$,
the detail space $W_l^o$ is a space of dimension $(2^{-l})^2$.
Its coefficients are recombined using the Alpert transform induced
by some directions of regularity. This geometry is specified by a
local geometric flow, 
a vector field meant to follow the geometric direction of regularity. 
This geometric flow is further constraint to have
 a specific structure as illustrated in Fig.~\ref{fig2}, 
It is structured by a partition into dyadic squares
in which the
flow, if there exists, is vertically or horizontally constant.
In each square of the partition, the flow being thus easily parametrized by its tangent.

For each choice of geometric flow, a specific orthogonalization process \citep{peyre-bandlets-theo} 
yields an orthogonal basis of bandlets that have vanishing moments along the direction of the
geometric flow. 
This geometry should obviously be adapted to each image: the
partition and the flow direction should match the image structures.
This choice of geometry can be seen 
as an ill posed problem of estimation of
the edges or of the direction of regularity. To avoid this issue,
the problem is recasted as
a best basis
search in a dictionary. The geometry chosen is the one of the best
basis.

The first step is to define a dictionary
 $\cDcont_{(2^{-j})^2}$ of orthogonal bandlet bases of
$V_j$ or equivalently a dictionary of possible geometric flows.
Obviously this dictionary should be finite and this require 
 a discretization of the
geometry. As proved by \citet{peyre-bandlets-theo}, this is not an issue:
the flow does not have to follow exactly the
direction of regularity but only up to a sufficient known precision.
It is indeed sufficient to parametrize
the flow in any dyadic square
by the tangent of a polynomial of degree $p$ (the number
of vanishing moments of the wavelets). The coefficients of this
polynomial can be further quantized. The resulting family of geometric
flow in a square is of size $O(2^{-jp})$. 

A basis of the dictionary $\cDcont_{(2^{-j})^2}$ 
is thus specified by a set of
dyadic squares partitions for each details spaces $W_l^o$, $l>j$,
and, for each square of the partition, a flow parametrized by a direction and one of these
$O(2^{-jp})$ polynomials.
The number of bases in the dictionary
$\cDcont_{(2^{-j})^2}$  grows exponentially with $2^{-j}$, but 
 the total
number of different bandlets $K_{(2^{-j})^2}$ grows only polynomially like
  $O(2^{-j(p+4)})$.
Indeed the bandlets in a given dyadic square with a given geometry are
reused in numerous bases. The total number of bandlets in the
dictionary is thus bounded by the sum over all  $O(2^{-2j})$ dyadic squares
and all  $O(2^{-jp)})$ choices for the flow
of the number of bandlets in the square. Noticing that $(2^{-j})^2$ is
a rough bound of the number of bandlets in any subspaces of $V_j$, we obtain
the existence of a constant $C_K$ such that $2^{-j(p+4)}\leq
K_{(2^{-j})^2} \leq C_K 2^{-j(p+4)}$.

\subsection{Approximation in bandlet dictionaries}

The key property of the bandlet basis
dictionary is that it provides an asymptotically optimal representation of
$C^{\alpha}$ geometrically regular functions.
Indeed \citet{peyre-bandlets-theo} prove
\begin{theorem}
\label{theo:bandapprox}Let $\alpha < p$ where $p$ in the number of 
wavelet vanishing moments, for any $\fcont$ $\mathbf{C}^\alpha$
geometrically 
regular function, there exists a real number
$C$ such that for any $T>0$ and  $2^j\leq T$
\begin{equation}\label{Lagrangian1}
 \min_{\cBcont\in \cDcont_{(2^{-j})^2}} \|
 f-P_{\cMcont_{\cBcontTf}}f\|^2+\dimMcontcBcontTf T^2
\leqslant CT^{2\alpha/(\alpha+1)}
\end{equation}
where the
subspace $\cMcont_{\cBcontTf}$ is the space spanned by the vectors of $\cBcont$ whose
inner product with $\fcont$ is larger than $T$.
\end{theorem}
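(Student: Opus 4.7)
The plan is to exhibit one specific basis $\cBcont^\star \in \cDcont_{(2^{-j})^2}$ that witnesses the claimed inequality; since the minimum over the dictionary can only improve on any fixed choice, this is enough. Observe first that for any orthonormal basis $\cBcont = \{b_n\}$ of $\Vcont_N$,
\[
\|\fcont - P_{\cMcont_{\cBcont,\fcont,T}}\fcont\|^2 + \dim{\cMcont_{\cBcont,\fcont,T}}\, T^2 = \sum_{|\langle \fcont, b_n\rangle| < T} |\langle \fcont, b_n\rangle|^2 + T^2\, \#\{n : |\langle \fcont, b_n\rangle| \geq T\},
\]
so it suffices to exhibit $\cBcont^\star$ in which $\fcont$ enjoys good sparse-approximation behaviour. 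A sufficient condition is that the $k$-th largest bandlet coefficient has squared modulus of order $k^{-(\alpha+1)}$: keeping the $M \asymp T^{-2/(\alpha+1)}$ largest coefficients then balances the two terms at the target rate $T^{2\alpha/(\alpha+1)}$.

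\textbf{Construction of $\cBcont^\star$.} For each wavelet detail subspace $W_l^o$ at scales $2^l \geq 2^j$, I would tile $[0,1]^2$ by a dyadic partition in which every square is of one of two kinds. \emph{Regular squares}, meeting no edge curve $\mathcal{C}_\gamma$, are given the empty flow, so the corresponding bandlets coincide with wavelets. \emph{Edge squares}, crossed by a single $\mathcal{C}_\gamma$, are given a flow obtained by approximating a local parametrization of $\mathcal{C}_\gamma$ by a polynomial of degree $p$ and then quantizing its coefficients within the finite family allowed by the dictionary. The side length of edge squares at scale $2^l$ is tuned, with a separate regime when $\fcont$ is blurred controlled by the blur scale $s$, so that the polynomial misfit to the true edge is compatible with the precision required by the bandlet vanishing moments.

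\textbf{Coefficient estimates.} In a regular square, the $\mathbf{C}^\alpha$ regularity of $\fcont$ and the $p > \alpha$ vanishing moments of the wavelets give $|\langle \fcont, \psi^o_{l,k}\rangle| \lesssim 2^{l(\alpha+1)}$, producing across scales $O(T^{-2/(\alpha+1)})$ coefficients above $T$ and a subthreshold squared tail of order $T^{2\alpha/(\alpha+1)}$. In an edge square, the Alpert bandletization along the chosen flow yields bandlets with $p$ vanishing moments in the direction of regularity of $\fcont$; combined with the $\mathbf{C}^\alpha$ regularity of $\fcont$ along the flow, and with the additional smoothing supplied by $h$ when $\fcont = \tilde\fcont \star h$, this again gives bandlet coefficients of order $2^{l(\alpha+1)}$ up to benign factors in the square size. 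Summing over the edge squares at each scale and over $l \geq j$ (using $2^j \leq T$) keeps both the count above threshold and the subthreshold tail bounded by $T^{2\alpha/(\alpha+1)}$.

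\textbf{Main obstacle.} The delicate ingredient is geometric: the flow in each edge square must lie in the quantized dictionary of $O(2^{-jp})$ polynomials of degree $p$, yet still be close enough to the true tangent field of $\mathcal{C}_\gamma$ for the bandlet vanishing moments to cancel the edge singularity to order $2^{l(\alpha+1)}$. Establishing this uniform polynomial approximation estimate, and carefully tracking how the blurring kernel modifies the effective regularity in the transition regime $2^l \approx s$, is the technical heart of the proof; the remainder is an optimization in the edge square size and a geometric summation across scales.
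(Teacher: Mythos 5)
A preliminary remark on the comparison you ask for: the paper does not prove Theorem~\ref{theo:bandapprox} at all --- it quotes it from \citet{peyre-bandlets-theo}. So the only thing to assess is whether your outline would reconstruct that reference's proof. At the level of strategy it does: exhibiting one adapted basis and using that the minimum over $\cDcont_{(2^{-j})^2}$ can only be smaller, splitting each detail space into regular squares (where bandlets reduce to wavelets) and edge squares (where an Alpert transform along a quantized polynomial flow is applied), and balancing the number of above-threshold coefficients against the subthreshold tail is exactly the architecture of the cited proof. Your regular-square computation is correct and complete.

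The proposal nevertheless has genuine gaps, and one step would fail as stated. First, the claim that in an edge square the bandletization ``again gives bandlet coefficients of order $2^{l(\alpha+1)}$'' is not what happens for a sharp edge ($h=\delta$): the directional vanishing moments cannot shrink the coefficients that encode the jump profile \emph{across} the edge, which remain of order $2^{l}$ however well the flow is adapted. The rate $T^{2\alpha/(\alpha+1)}$ comes from controlling the \emph{number} of such coefficients and the decay of the sorted sequence after recombination along the edge, jointly optimized over the edge-square width --- a two-parameter counting argument that your outline replaces by a single amplitude bound. Second, the ``main obstacle'' you correctly identify (that a flow from the finite quantized family of $O(2^{-jp})$ polynomials approximates the true tangent field to the precision demanded by the vanishing moments, uniformly over squares and scales, with the transition regime $2^l\approx s$ for blurred edges) is named but not resolved; this is the actual content of the theorem, not a detail. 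Third, $\|\fcont-P_{\cMcont}\fcont\|^2$ contains the residual $\|\fcont-P_{\Vcont_N}\fcont\|^2$ at scales finer than $2^j$, which you never bound; this is precisely where the hypothesis $2^j\leq T$ must be used, and it needs the edge-counting estimate again. As it stands the proposal is an accurate announcement of the proof rather than a proof.
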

This Theorem gives the kind of control we require in
Theorem~\ref{theo:minimizationE}.

Being able to perform efficiently the minimization of the previous
Theorem is very important to exploit numerically this property. It
turns out that a fast algorithm
 can be used to 
find the best basis that minimizes $\|
\fcont-P_{\cMcont_{\cBcontTf}}\fcont\|^2+\dimMcontcBcontTf T^2$ or
equivalently
$\|
P_{V_j} \fcont-P_{\cMcont_{\cBcontTf}}\fcont\|^2+\dimMcontcBcontTf
T^2$.
We use first  the additive structure with respect to the subband
$W_l^o$ of this ``cost''  $\|
P_{V_j}\fcont-P_{\cMcont_{\cBcontTf}}\fcont\|^2+\dimMcontcBcontTf T^2$ to 
split the minimization into several independent minimizations
 on each
subbands. A bottom-top fast optimization of the geometry (partition and
flow) similar to the one proposed by 
\citet{CoifmanWickerhauser}, and
\citet{donoho-CART}
can
be performed on each subband thanks to two observations.
Firstly, for a given dyadic square, the limited number of possible flows is such that
the best flow can be obtained with a simple brute force exploration.
Secondly, the hierarchical tree structure of the partition and the
additivity of the cost function with respect to the partition
implies that
the best partition of a given dyadic square
is either itself or the union of the best partitions of its four dyadic
subsquares. 
This leads to a bottom up optimization algorithm once the
best flow has been found for every dyadic squares.
Note that this algorithm is adaptive with respect to $\alpha$: it does
not require the knowledge of the regularity parameter to be performed.

More precisely, the optimization algorithm goes as follows.
The brute force search of the best flow is
conducted independently over all dyadic squares and all detail spaces
with a total complexity of order $O(2^{-j(p+4)})$.
This yields a value
of the penalized criterion for each dyadic squares. 
It remains now to find the best partition. We proceed in a bottom up
fashion. The best partition with squares of width smaller than
$2^{j+1}$ is obtained from the best partition with squares of width
smaller than $2^j$: inside each dyadic square of
width $2^{j+1}$ the best partition is either the partition obtained so
far or the considered square.
This choice is made
 according to the cost computed so far.
Remark that the initialization is straightforward as
the best partition with square of size $1$
is obviously the full partition.
The complexity of this best partition search is of order $O(2^{-2j})$ and thus
the complexity of the best basis is driven by the best flow
search whose complexity is of order $O(2^{-j(p+4)})$, which
nevertheless remains
polynomial in $2^{-j}$.

\subsection{Bandlet estimators}

Estimating the edges is a complex task on blurred function and becomes
even much harder in presence of noise. Fortunately, the bandlet
estimator proposed by \citet{peyre07:_geomet_estim_orthog_bandl_bases} do not rely on such a detection process. The chosen
geometry is obtained with the best basis selection of the
previous section. This allows one to select an efficient basis even in the noisy setting.

Indeed, combining the bandlet approximation result of Theorem \ref{theo:bandapprox}
with the model selection results of Theorem \ref{theo:minimizationE} 
proves that the selection model based bandlet estimator  is near asymptotically minimax 
for $\mathbf{C}^\alpha$ geometrically regular images.

For a given noise level $\sigma$, one has to select a dimension
$N=(2^{-j})^2$
and a threshold $T$. The best basis algorithm selects then 
the bandlet basis $\widehat{\cBcont}$ 
amongst $\cDcont_{N}=\cDcont_{(2^{-j})^2}$ that minimizes
\[
\|P_{\Vcont_N}\Xcont-P_{\cMcont_{\cBcontTX}}\Xcont\|^2 + T^2 \dimMcontcBcontTX
\]
and the model selection based estimate is
$F=P_{\cMcont_{\cBcontTX}}\Xcont$.
We should now specify the choice of $N=(2^{-j})^2$ and $T$ in order to
be able to use Theorem~\ref{theo:minimizationE} and
Theorem~\ref{theo:bandapprox} to obtain the near asymptotic minimaxity of the
estimator. On the one hand, the dimension $N$ should be chosen large
enough so that the unknown linear approximation error
$\|f-P_{\Vcont_N}\|^2$ is small. One the other hand, the dimension $N$
should not be too large so that the total number of bandlets $K_N$,
which satisfies $\sqrt{N}^{(p+4)}\leq K_N \leq C_K \sqrt{N}^{(p+4)}$,
imposing a lower bound on the value of the threshold remains small.
For the sake of simplicity, as we consider an asymptotic behaviour,
we assume that $\sigma$ is smaller than $1/4$. This implies
that it exists $j<0$ such that $\sigma \in (2^{j-1}, 2^j]$
The following theorem proves that  
choosing $N=2^{-2j}$ and $T=\widetilde{\lambda}\sqrt{|\log
  \sigma|}\sigma$ with $\widetilde{\lambda}$
large enough yields a nearly asymptotically  minimax estimator.

\begin{theorem}\label{TheoFinal}
Let $\alpha < p$ where $p$ in the number of 
wavelet vanishing moments and
 let $K_0\in\N^*$ and $\widetilde{\lambda}\geq\sqrt{2(p+4)}
\sup_{K\geq K_0} \lambda_0(K)$.
For any $\mathbf{C}^\alpha$ geometrically regular function $f$,
there exists $C > 0$ such that for any 
\[
\sigma\leq \min( \frac{1}{4}, \max(C_K,K_0/2)^{-1/(p+4)}),
\]
if we let $N= 2^{-2j}$ with  $j$ such that $\sigma\in (2^{j-1}, 2^j]$ and 
$T=\widetilde{\lambda}\sqrt{|\log \sigma|}\sigma$, the estimator
$F=P_{\cMcont_{\widehat{\cBcont},\Xcont,T}}\Xcont$ obtained by thresholding
$P_{\Vcont_N}\Xcont$ with a threshold $T$ in the basis
$\widehat{\cBcont}$ of $\cDcont_{N}$ 
that minimizes
\[
\|P_{\Vcont_N}\Xcont-P_{\cMcont_{\cBcontTX}}\Xcont\|^2 + T^2 \dimMcontcBcontTX
\]
satisfies
\begin{equation*}
E \left[ \| f-F\|^2 \right] \leq C(|\log
\sigma|\sigma^2)^{\frac{\alpha}{\alpha+1}}.
\end{equation*}
\end{theorem}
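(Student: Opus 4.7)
The proof combines Theorem~\ref{theo:minimizationE}, applied to the bandlet dictionary $\cDcont_N$ of Section~\ref{sec:orth-band-basis}, with the bandlet approximation bound of Theorem~\ref{theo:bandapprox}. My plan is in three steps: (i) verify that the choice $T=\widetilde{\lambda}\sqrt{|\log\sigma|}\sigma$ is admissible for Theorem~\ref{theo:minimizationE}; (ii) use Theorem~\ref{theo:bandapprox} to bound the oracle term on the right-hand side of that theorem; (iii) collect constants and confirm that the residual $\kappa\sigma^2/K_N$ is of smaller order than the target rate.

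For step (i), the assumption $\sigma\in(2^{j-1},2^j]$ and the choice $N=2^{-2j}$, combined with the dictionary size estimate $2^{-j(p+4)}\leq K_N\leq C_K\,2^{-j(p+4)}$ recalled in Section~\ref{sec:orth-band-basis}, give $K_N\asymp \sigma^{-(p+4)}$. The smallness hypothesis $\sigma\leq\max(C_K,K_0/2)^{-1/(p+4)}$ then yields simultaneously $K_N\geq K_0$ (so that $\lambda_0(K_N)\leq\sup_{K\geq K_0}\lambda_0(K)$) and $\log K_N\leq 2(p+4)|\log\sigma|$. Writing $T=\lambda\sqrt{\log K_N}\sigma$ with $\lambda=\widetilde{\lambda}\sqrt{|\log\sigma|/\log K_N}$, the lower bound on $\widetilde{\lambda}$ gives $\lambda\geq \widetilde{\lambda}/\sqrt{2(p+4)}\geq \sup_{K\geq K_0}\lambda_0(K)\geq \lambda_0(K_N)$, so the hypotheses of Theorem~\ref{theo:minimizationE} are met. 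For step (ii), Theorem~\ref{theo:bandapprox} applies because its precondition $2^j\leq T$ follows from $\sigma>2^{j-1}$ as soon as $\widetilde{\lambda}\sqrt{|\log\sigma|}\geq 2$, which is ensured by $\sigma\leq 1/4$ and the above lower bound on $\widetilde{\lambda}$. The theorem then provides a basis $\cBcont^\ast\in\cDcont_N$ and a subspace $\cMcont^\ast=\cMcont_{\cBcont^\ast,f,T}\in\cCcont_N$ with $\|f-P_{\cMcont^\ast}f\|^2+\dim{\cMcont^\ast}\,T^2\leq CT^{2\alpha/(\alpha+1)}$, which upper-bounds the minimum over $\cCcont_N$ appearing in Theorem~\ref{theo:minimizationE}.

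Combining the two bounds gives $E[\|f-F\|^2]\leq (1+\epsilon)\,CT^{2\alpha/(\alpha+1)}+\kappa\sigma^2/K_N$, and substituting $T=\widetilde{\lambda}\sqrt{|\log\sigma|}\sigma$ yields a leading term proportional to $(|\log\sigma|\sigma^2)^{\alpha/(\alpha+1)}$. Since $K_N\to\infty$ as $\sigma\to 0$ and $\sigma^2\leq (|\log\sigma|\sigma^2)^{\alpha/(\alpha+1)}$ for $\sigma$ small enough (as $|\log\sigma|\geq 1$), the remainder $\kappa\sigma^2/K_N$ is absorbed into the same rate and one concludes with a single constant $C$ depending on $\alpha$, $p$, $\widetilde{\lambda}$, and $f$. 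The main difficulty is not analytic but combinatorial in the parameters: one must track the change of variable $\lambda=\widetilde{\lambda}\sqrt{|\log\sigma|/\log K_N}$ so that the factor $\sqrt{2(p+4)}$ in the lower bound on $\widetilde{\lambda}$ exactly compensates the gap between $|\log\sigma|$ and $\log K_N$, and one must verify that the single explicit smallness condition on $\sigma$ implies each auxiliary requirement used above ($K_N\geq K_0$, $\log K_N\leq 2(p+4)|\log\sigma|$, $\widetilde{\lambda}\sqrt{|\log\sigma|}\geq 2$, and $2^j\leq T$). No deeper estimate is needed, since the statistical concentration and the geometric approximation have already been carried out in the two input theorems.
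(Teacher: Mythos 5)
Your proposal is correct and follows essentially the same route as the paper's own proof: verify that $T=\widetilde{\lambda}\sqrt{|\log\sigma|}\,\sigma$ dominates $\lambda_0(K_N)\sqrt{\log K_N}\,\sigma$ via the bounds $K_0\leq K_N\leq \sigma^{-2(p+4)}$, apply Theorem~\ref{theo:minimizationE}, bound the oracle term by Theorem~\ref{theo:bandapprox} using $2^j\leq T$, and absorb the remainder $\kappa\sigma^2/K_N$ into the rate. Your write-up is in fact slightly more explicit than the paper's on the auxiliary verifications (the change of variable for $\lambda$ and the check that $T\geq 2^j$), but the argument is the same.
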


Theorem~\ref{TheoFinal} is a direct consequence of
Theorem~\ref{theo:minimizationE} and Theorem~\ref{theo:bandapprox}, 
\begin{proof}
For any $\sigma\in(2^{j-1},2^j]$, 
observe that $2^{-j(p+4)}\leq K_{N}=K_{(2^{-j})^2}\leq C_K
2^{-j(p+4)}$ and thus
$
(2\sigma)^{-(p+4)} \leq K_{N} \leq C_K \sigma^{-(p+4)}.
$
The restriction on $\sigma$ further implies then that $K_N\geq K_0$
and $K_N\leq\sigma^{-2(p+4)}$. As $\widetilde{\lambda}\geq\sqrt{2(p+4)}
\sup_{K\geq K_0} \lambda_0(K)$,
$T= \widetilde{\lambda}\sqrt{|\log \sigma|}\sigma\geq
\lambda \sqrt{\log(K_N)} \sigma$ with 
 $\lambda\geq\lambda_0(K_N)$ so that Theorem~\ref{theo:minimizationE}
applies. This yields
\begin{align}
\label{eq:first}
E \left[ \| f-F\|^2 \right] \leq (1+\epsilon) \min_{\cMcont \in \cCcont_N} \left( \|f-P_{\cMcont}f\|^2 +
T^2 \dimMcont \right) + \frac{\kappa}{K_N} \sigma^2\quad.
\end{align}
Now as $T\geq 2^j$, Theorem~\ref{theo:bandapprox} applies and there is
a constant $C$ independent of $T$ such that
\[
\min_{\cMcont \in \cCcont_N} \left( \|f-P_{\cMcont}f\|^2 +
T^2 \dimMcont \right) \leq C (T^2)^{\alpha/(\alpha+1)}\quad.
\]
Plugging this bound into (\ref{eq:first}) gives the result. 
\end{proof}

The estimate $F=P_{\cMcont_{\widehat{\cBcont},T}}\Xcont$
is computed efficiently by the same fast algorithm used in the
approximation setting
 without requiring
the knowledge of the regularity parameter $\alpha$. The model selection
based bandlet estimator is
thus a tractable adaptive estimator that attains, up to the logarithmic term,
the best possible asymptotic minimax risk decay for $\mathbf{C}^\alpha$
geometrically regular function.

Although Theorem~\ref{TheoFinal} applies only to $\mathbf{C}^\alpha$
geometrically regular function, one can use  the bandlet estimator for any type of
images. 
Figure~\ref{fig-results-visuel} illustrates
the good behaviour of the bandlet estimator for natural images already shown in
\cite{peyre07:_geomet_estim_orthog_bandl_bases}. Each
line presents the original image, the degraded noisy image and two
estimations, one using classical translation invariant estimator \citep{coifman95:_trans}.
and the other using the bandlet estimator. The bandlet improvement
with respect to the classical wavelet estimator can be seen
numerically as well as
visually. The quadratic error is smaller with the bandlet estimator
and  the bandlets preserve much more geometric structures in the
images.

\begin{figure*}
\begin{center}
	\begin{tabular}{cccc}
\multicolumn{4}{c}{Geometric Image}\\
   \includegraphics[width=0.23\linewidth]{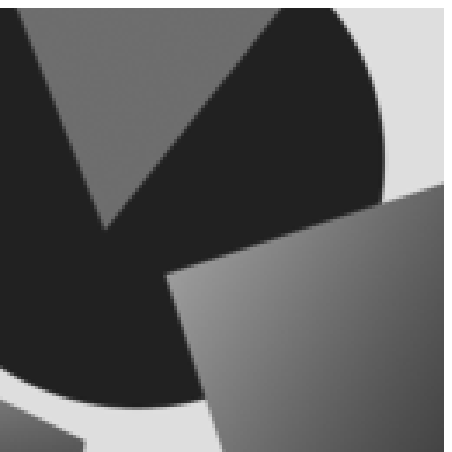}& 
   \includegraphics[width=0.23\linewidth]{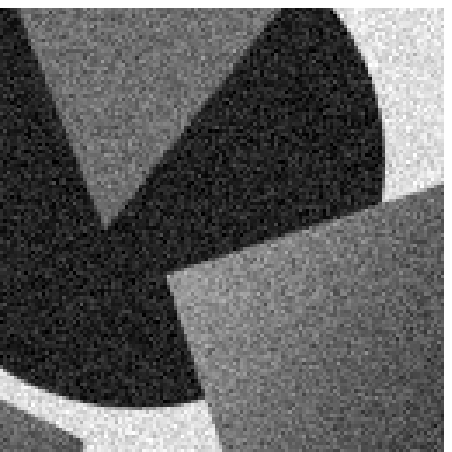}& 
   \includegraphics[width=0.23\linewidth]{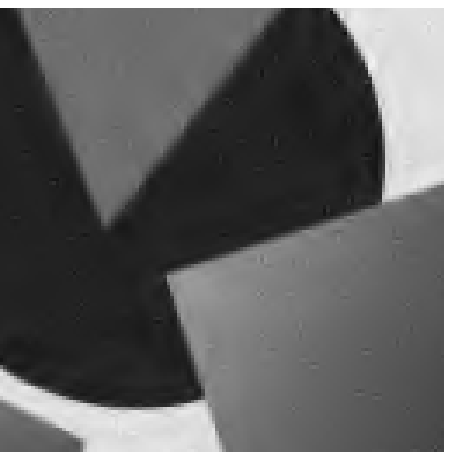}&
   \includegraphics[width=0.23\linewidth]{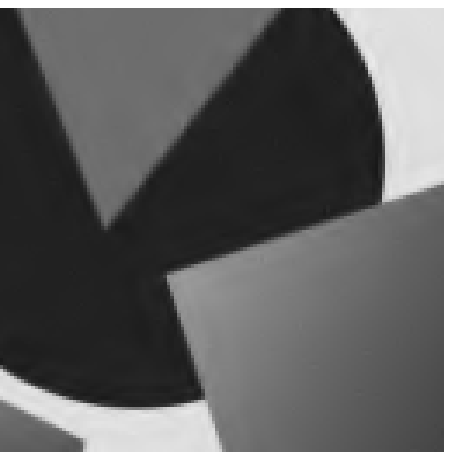}\\
   Original & Noisy (22.0dB) & TI Wavelets (36.0dB) & Bandlets
   (38.3dB) \\
\multicolumn{4}{c}{Barbara}\\
    \includegraphics[width=0.23\linewidth]{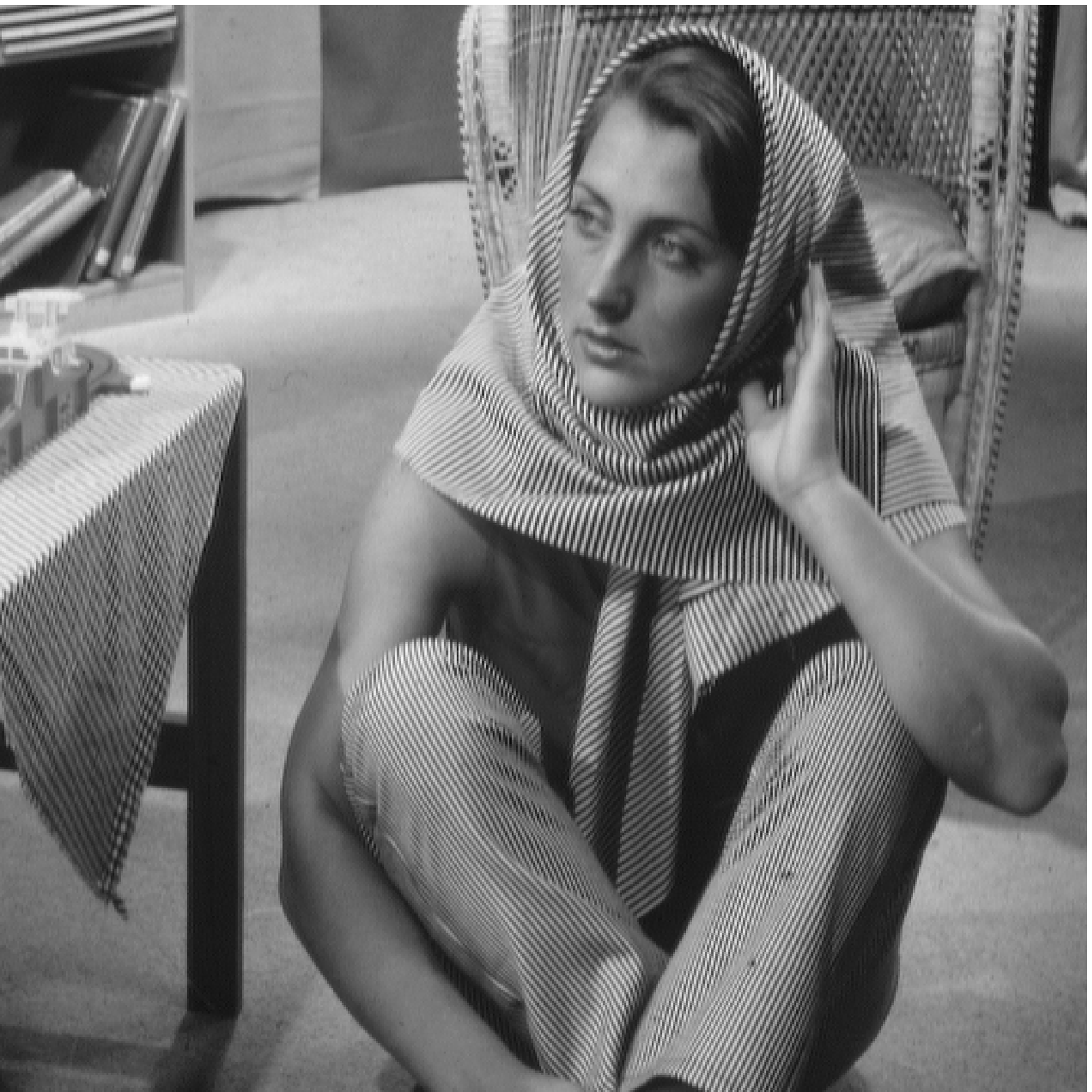}& 
	\includegraphics[width=0.23\linewidth]{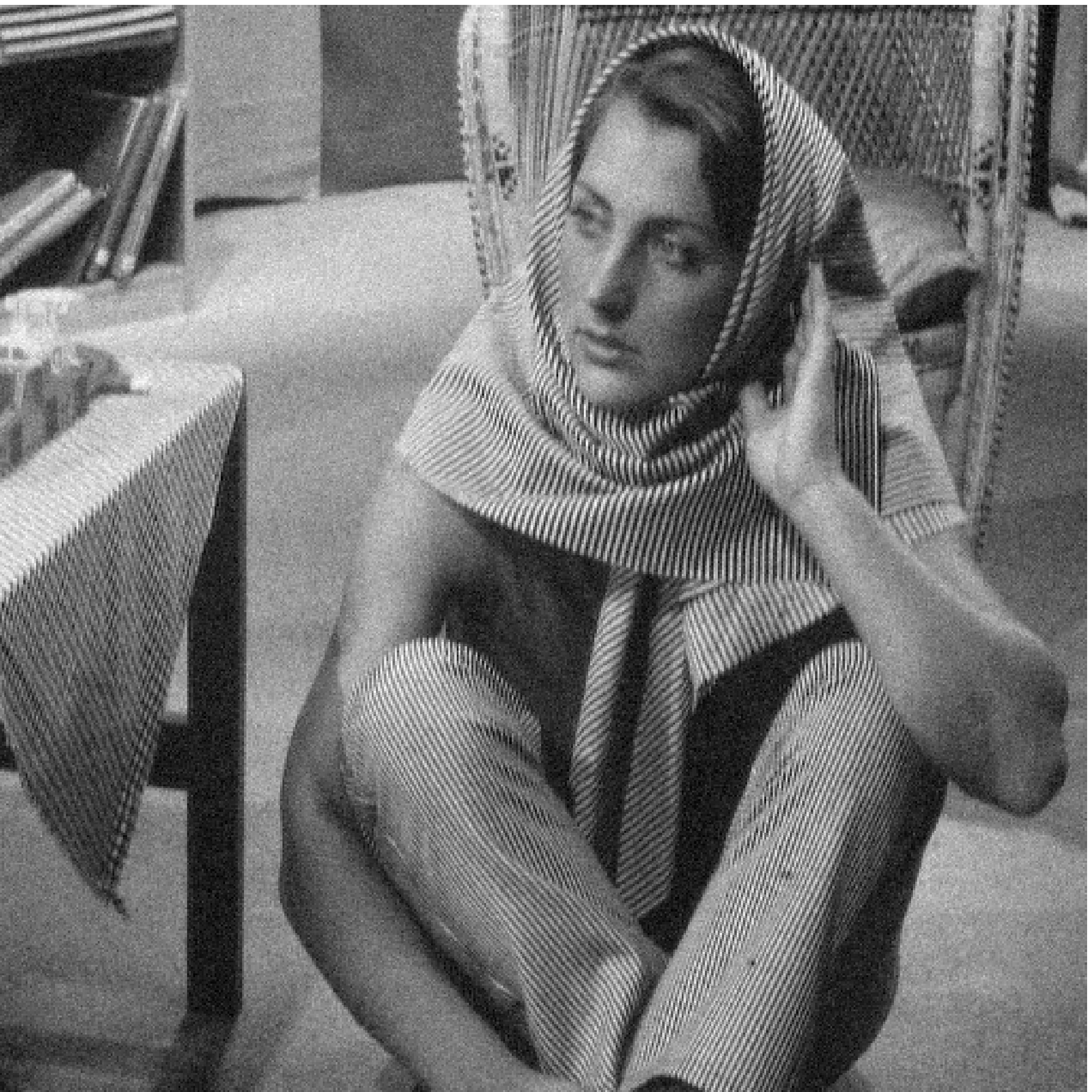}& 
	\includegraphics[width=0.23\linewidth]{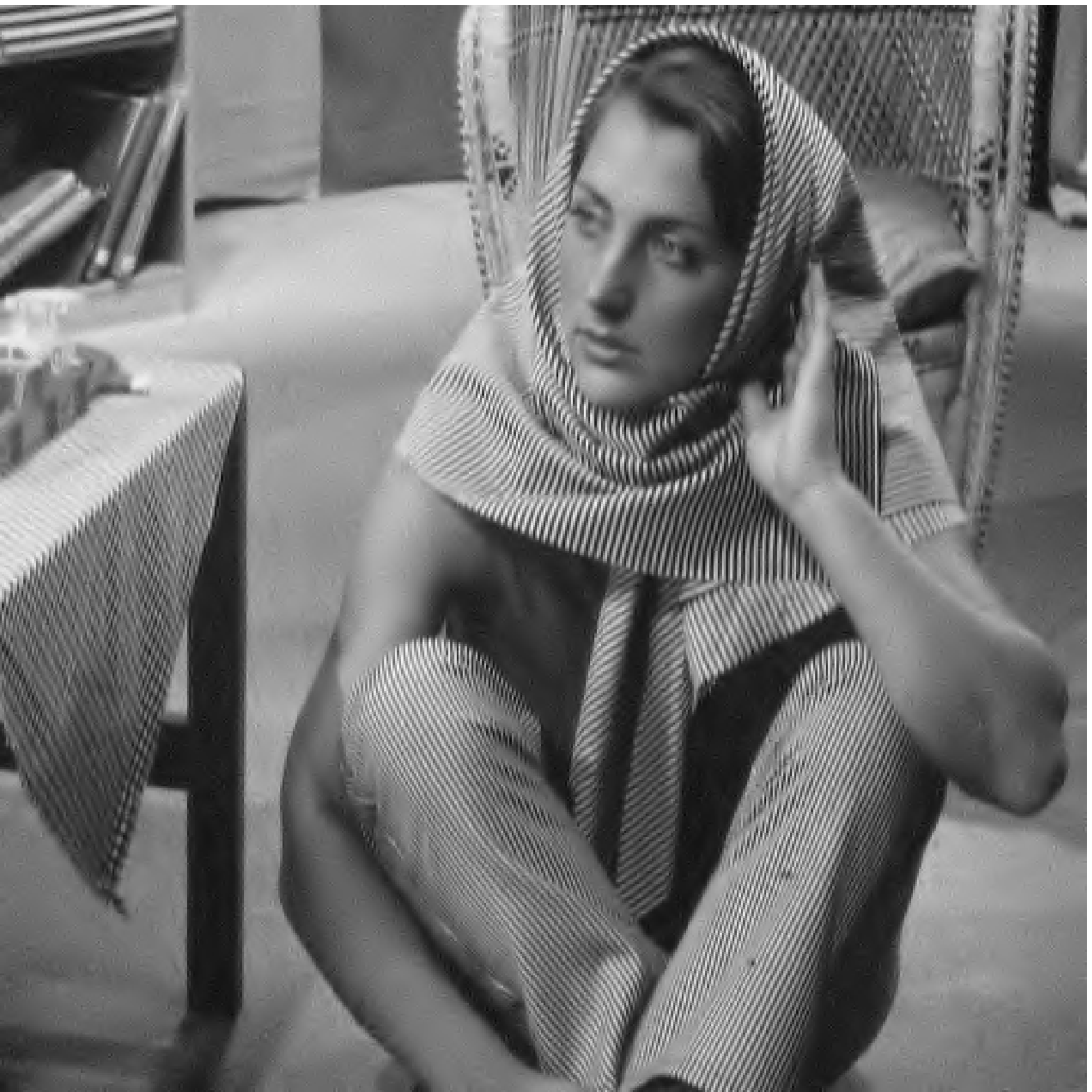}&
	\includegraphics[width=0.23\linewidth]{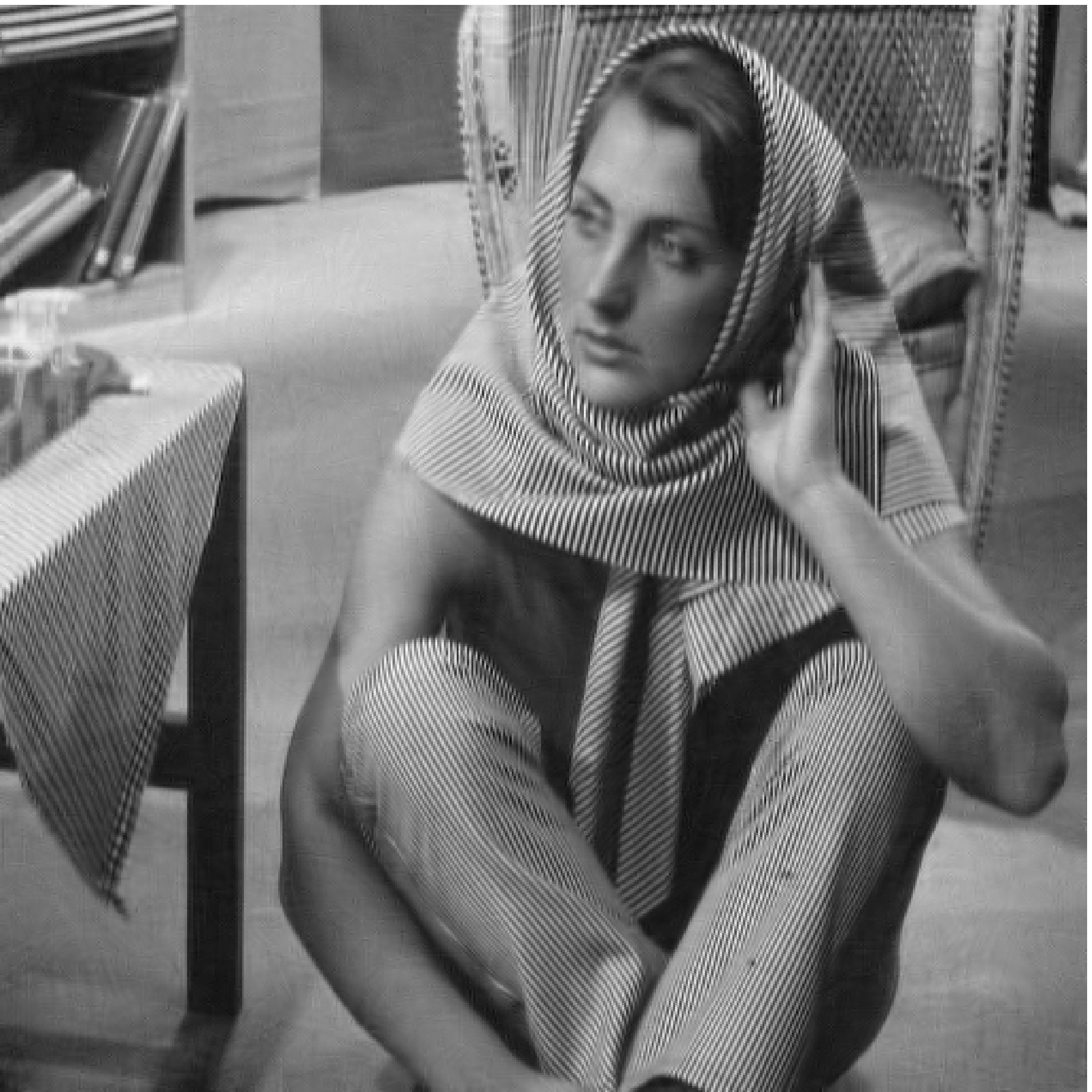}\\
   Original & Noisy (22.0dB) & TI Wavelets (26.5dB) & Bandlets
   (28.1dB)\\
\multicolumn{4}{c}{Barbara Closeup}\\
    \includegraphics[width=0.23\linewidth]{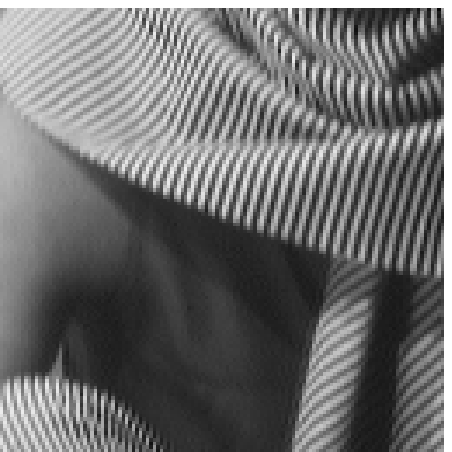}& 
	\includegraphics[width=0.23\linewidth]{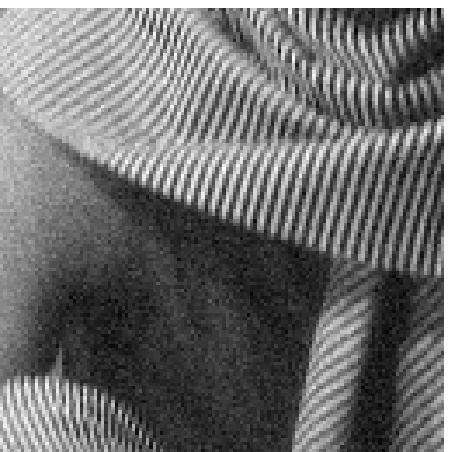}& 
	\includegraphics[width=0.23\linewidth]{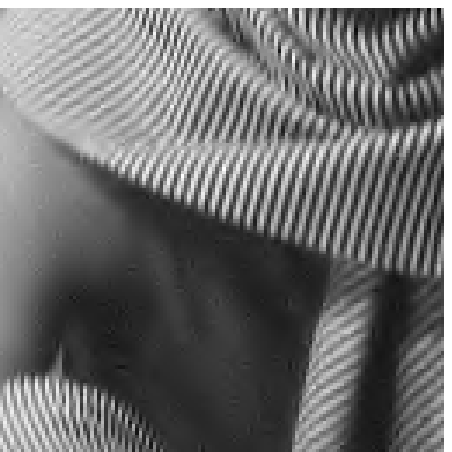}&
	\includegraphics[width=0.23\linewidth]{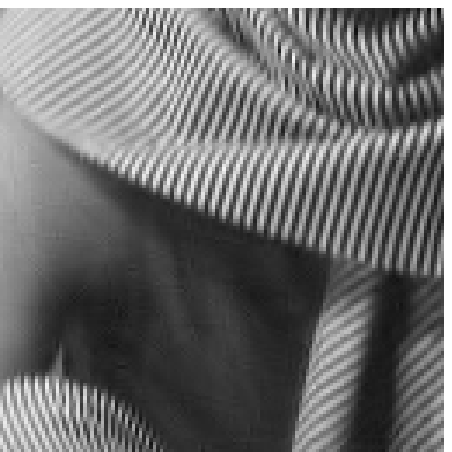}\\
   Original & Noisy (22.0dB) & TI Wavelets (26.5dB) & Bandlets
   (28.1dB)\\
\multicolumn{4}{c}{Lena}\\
    \includegraphics[width=0.23\linewidth]{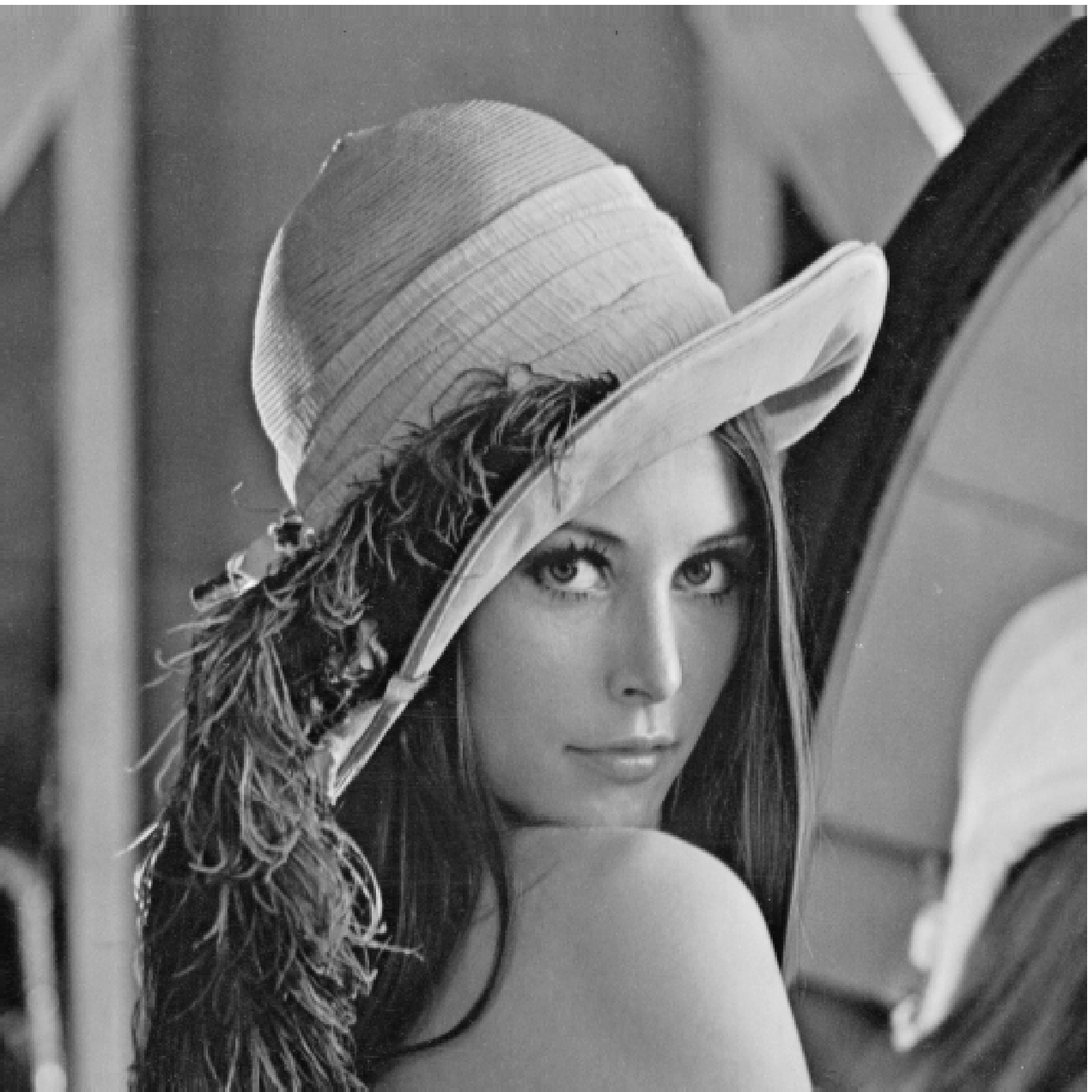}& 
	\includegraphics[width=0.23\linewidth]{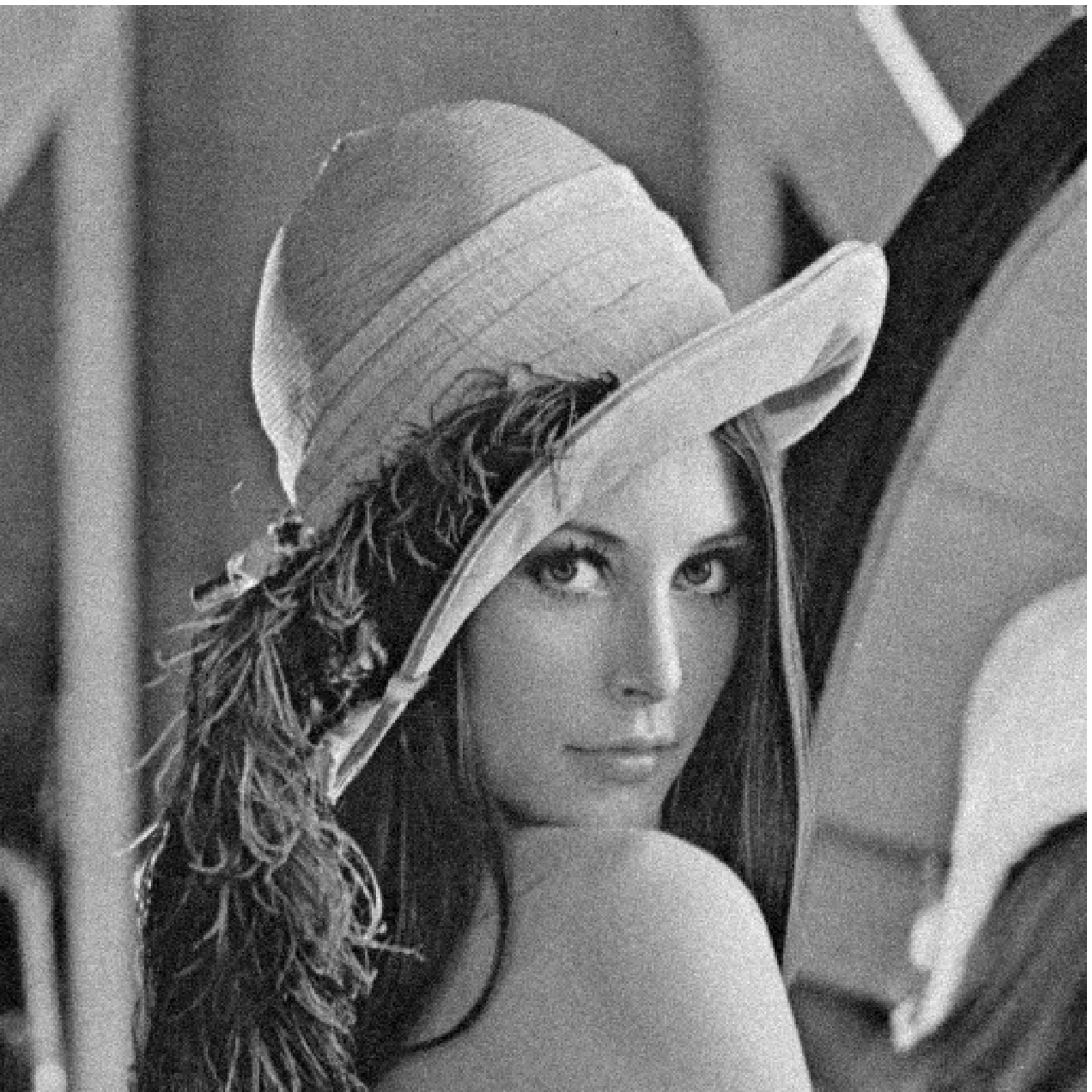}& 
	\includegraphics[width=0.23\linewidth]{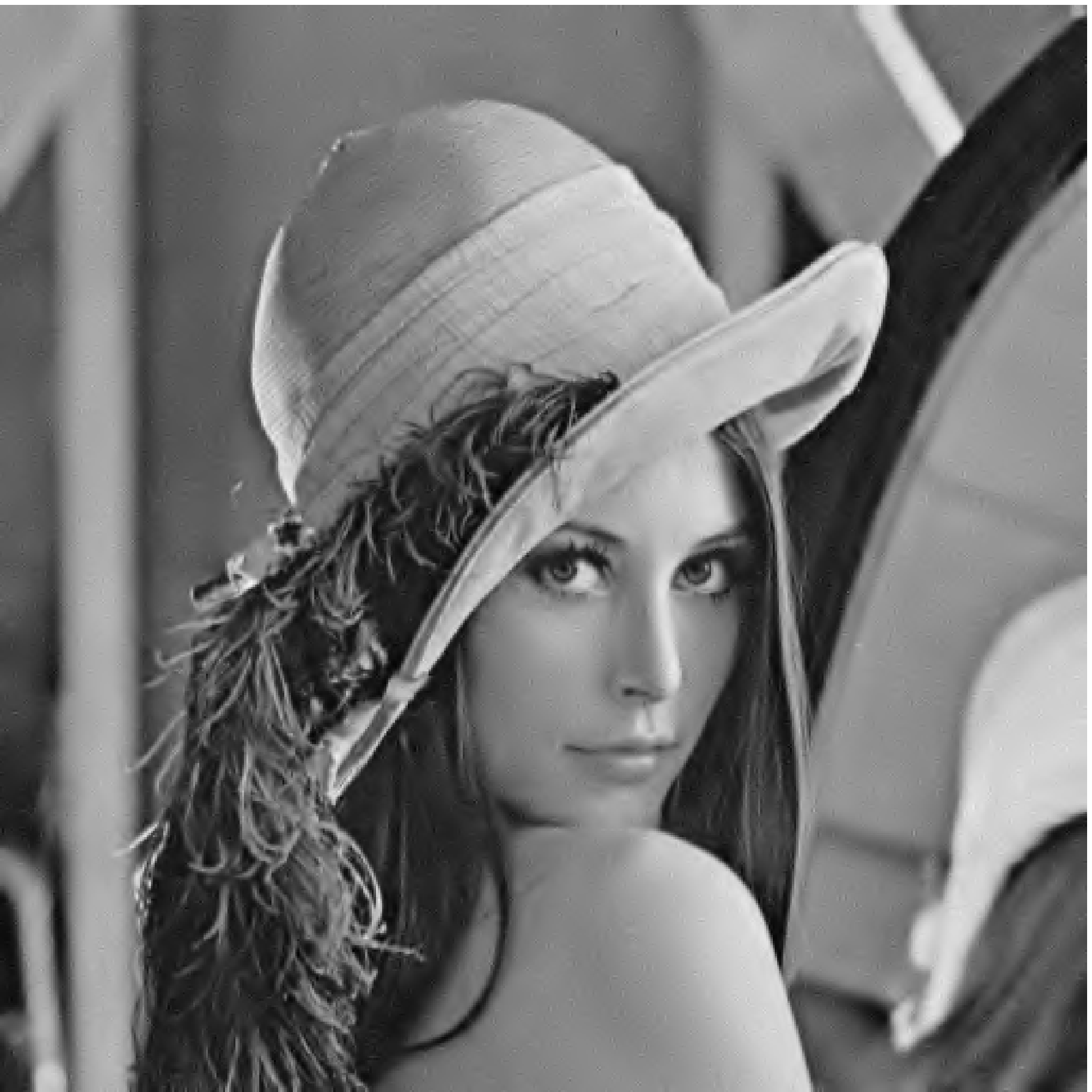}&
	\includegraphics[width=0.23\linewidth]{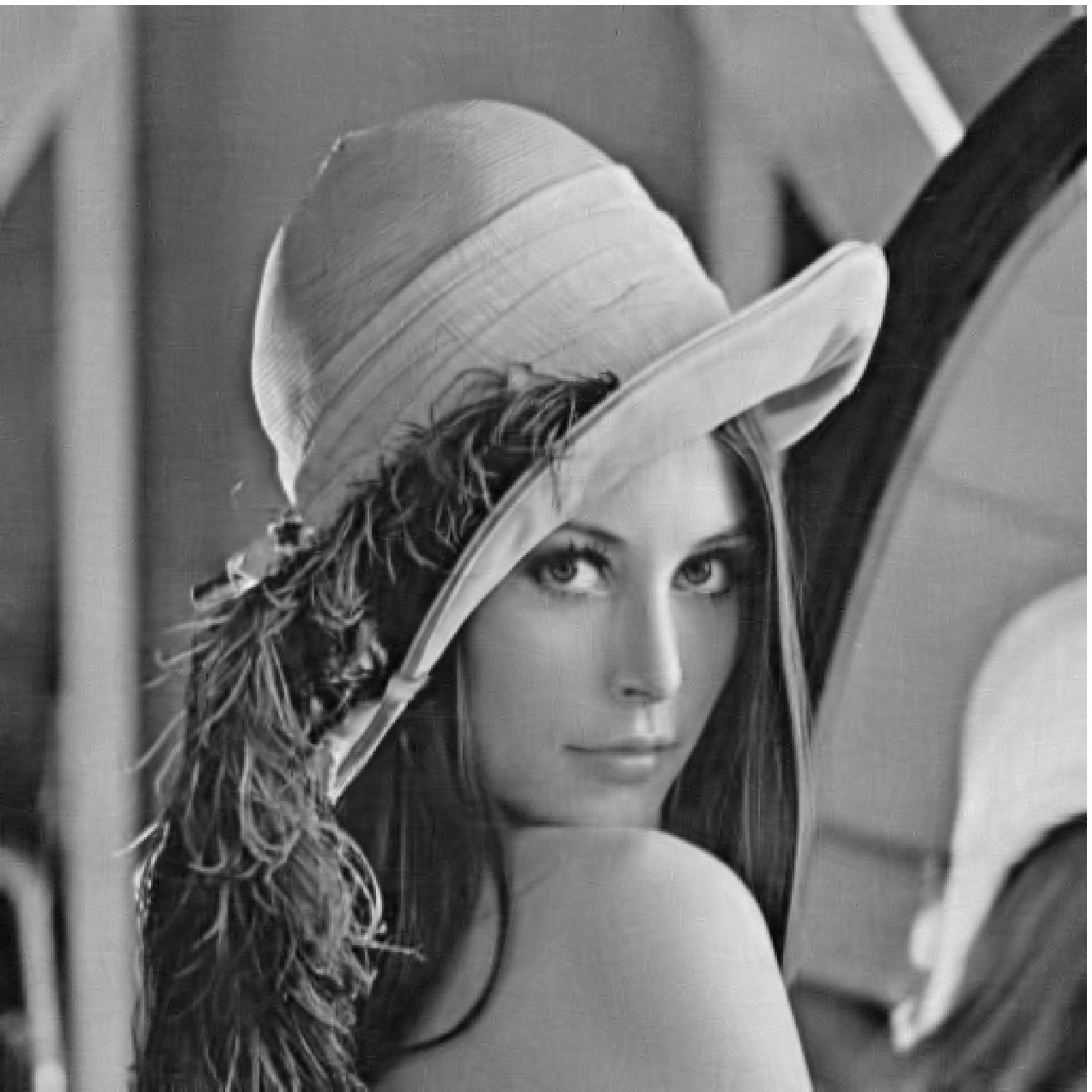}\\
   Original & Noisy (22.0dB) & TI Wavelets (28.1dB) & Bandlets
   (28.6dB)\\
\multicolumn{4}{c}{Lena Closeup}\\
    \includegraphics[width=0.23\linewidth]{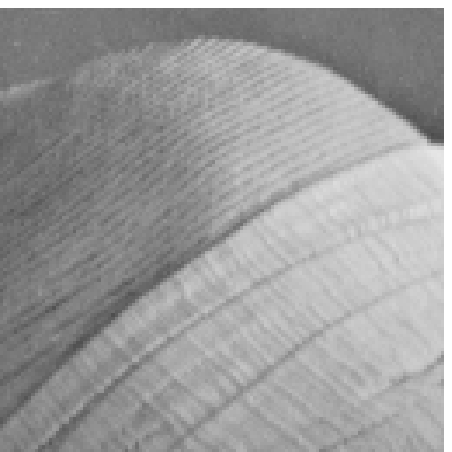}& 
	\includegraphics[width=0.23\linewidth]{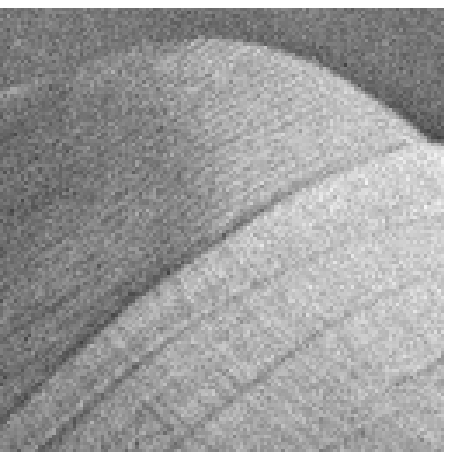}& 
	\includegraphics[width=0.23\linewidth]{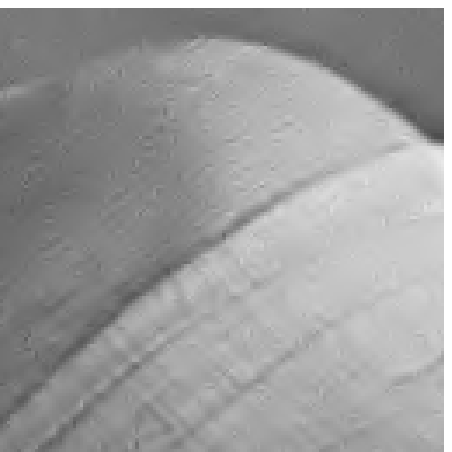}&
	\includegraphics[width=0.23\linewidth]{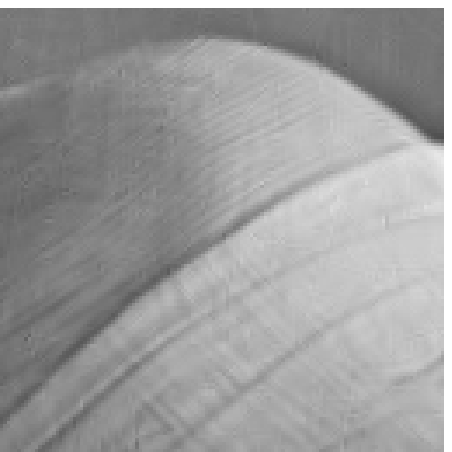}\\
   Original & Noisy (22.0dB) & TI Wavelets (28.dB) & Bandlets
   (28.6dB)
   \end{tabular}
 \end{center}

 \caption{
Comparison between  the  translation invariant wavelet
estimator and the bandlet estimator. The number within
parenthesis is the PSNR defined by
$-10\log\left(\frac{\|f-F\|^2}{\|f\|_\infty^2}\right)$ (the larger the better).
	}
\label{fig-results-visuel}
\end{figure*}

\appendix

\section{Proof of Theorem~\ref{theo:minimizationE}}

Concentration inequalities are at the core of all the selection model estimators.
Essentially, 
 the penalty should dominate the random fluctuation of the
minimized quantity. 
The key lemma, Lemma~\ref{lem:concentration}, uses
 a concentration inequality for Gaussian
variable to ensure, with high
probability, that the noise energy is small simultaneously in all
the subspaces $\cMcont_I$ spanned by a subset $I$ of the $K_N$ different
vectors, denoted by  $\gcont_k$, of $\cDcont_N$.
\begin{lemma}
\label{lem:concentration}
For all $u\geq 0$, with a probability greater than or equal to $1-2/K_N e^{-u}$,
\[
\forall I \subset \setoneKN \text{ and }\cMcont_I = \Span\{ \gcont_k \}_{k\in I},
\quad 
\|P_{\cMcont_I }\Wcont\| \leq \sqrt{M_I} + \sqrt{ 4 \log(K_N)
  \dimMcontI  + 2 u}
\] 
where $\dimMcontI$ is the dimension of $\cMcont_I$.
\end{lemma}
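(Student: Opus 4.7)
The proof combines a single-subspace Gaussian concentration bound with a union bound over the (finite family of) distinct projection subspaces $\cMcont_I$. The key trick is to index the union bound by the dimension $d=\dimMcontI$ and to calibrate the deviation threshold to $d$, so that the combinatorial factor counting subspaces of dimension $d$ is absorbed by the Gaussian tail, leaving a geometric series that sums to order $1/K_N$. I interpret the symbol $M_I$ in the statement as $\dim{\cMcont_I}$.

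\textbf{Step 1: single-subspace concentration.} Fix a linear subspace $\cMcont\subset\Vcont_N$ of dimension $d$. Since $P_\cMcont\Wcont$ is a standard Gaussian vector on $\cMcont$, one has
\[
\mathbb{E}\|P_\cMcont\Wcont\|\;\leq\;\sqrt{\mathbb{E}\|P_\cMcont\Wcont\|^2}\;=\;\sqrt{d},
\]
and the map $\Wcont\mapsto\|P_\cMcont\Wcont\|$ is $1$-Lipschitz with respect to the ambient norm. The Gaussian concentration inequality of Cirelson--Ibragimov--Sudakov (or Borell) then yields
\[
\Proba\!\left(\|P_\cMcont\Wcont\|\geq \sqrt{d}+t\right)\;\leq\;e^{-t^2/2},\qquad t\geq 0.
\]

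\textbf{Step 2: enumeration and union bound.} Any subspace $\cMcont_I=\Span\{\gcont_k\}_{k\in I}$ of dimension $d$ is also generated by some linearly independent subfamily of size $d$ drawn from the $K_N$ generators of $\cDcont_N$; hence the number of distinct $d$-dimensional subspaces arising in the lemma is at most $\binom{K_N}{d}\leq K_N^d$. For each $d\in\{1,\ldots,N\}$ apply Step~1 with the dimension-dependent threshold $t_d:=\sqrt{4\log(K_N)\,d+2u}$, so that $e^{-t_d^2/2}=K_N^{-2d}e^{-u}$. Summing first over the at most $K_N^d$ subspaces of dimension $d$ and then over $d$,
\[
\sum_{d=1}^{N} K_N^d\cdot K_N^{-2d}e^{-u}\;=\;\sum_{d=1}^{N}K_N^{-d}e^{-u}\;\leq\;\frac{e^{-u}}{K_N-1}\;\leq\;\frac{2\,e^{-u}}{K_N},
\]
which is valid for $K_N\geq 2$. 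On the complementary event, simultaneously for every $I\subset\setoneKN$, writing $d=\dimMcontI$,
\[
\|P_{\cMcont_I}\Wcont\|\;\leq\;\sqrt{d}+t_d\;=\;\sqrt{\dimMcontI}+\sqrt{4\log(K_N)\dimMcontI+2u},
\]
which is the claimed inequality.

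\textbf{Main obstacle.} The only real subtlety is the dimension-dependent calibration of $t_d$: one needs the $4\log(K_N)\,d$ inside the radical precisely to cancel the combinatorial factor $\binom{K_N}{d}\leq K_N^d$ via $K_N^d\cdot e^{-t_d^2/2}=K_N^{-d}e^{-u}$, yielding a convergent geometric series whose sum is of order $1/K_N$. A uniform (dimension-free) threshold would either leak an additional power of $K_N$ in the probability or inflate the deviation term from $\log K_N$ to $K_N\log K_N$. The Gaussian concentration step itself, together with the elementary bound $\mathbb{E}\|P_\cMcont\Wcont\|\leq\sqrt{d}$, is standard.
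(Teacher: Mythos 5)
Your proof is correct and follows essentially the same route as the paper's: the Tsirelson--Borell concentration inequality for the $1$-Lipschitz map $\Wcont\mapsto\|P_{\cMcont}\Wcont\|$ combined with $E\left[\|P_{\cMcont}\Wcont\|\right]\leq\sqrt{\dimMcont}$, the dimension-calibrated threshold $t_d=\sqrt{4\log(K_N)d+2u}$, and a union bound via $\binom{K_N}{d}\leq K_N^d$ summing to a geometric series bounded by $2e^{-u}/K_N$. Your reading of $M_I$ as $\dimMcontI$ also matches the paper's intent, and your bookkeeping (counting distinct subspaces by dimension rather than subsets by cardinality) is if anything slightly cleaner than the paper's.
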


\begin{proof}
The key ingredient of this proof is a concentration
inequality. Tsirelson's Lemma\citep{tsirelson76:_norms_gauss} implies that for any $1$-Lipschitz function $\phi:
\mathbb{C}^n\to \mathbb{C}$  ($|\phi(x)-\phi(y)|\leq\|x-y\|$) if $\Wcont$
is a Gaussian standard white noise
in $\mathbb{C}^n$ then
\[
\Proba\left\{\phi(\Wcont)\geq E\left[\phi(\Wcont)\right]+t\right\}\leq e^{-t^2/2}\quad.
\]
  
For any space $\cMcont$, $\fcont \mapsto
\|P_{\cMcont }\fcont\|$
is $1$-Lipschitz. Note that one can first project $f$ into the finite
dimensional space $\Vcont_N$ without modifying the norm. We can thus apply Tsirelson's Lemma with  $t=\sqrt{4 \log
  (K_N) \dimMcont + 2 u }$
and obtain
\[
\Proba\left\{\|P_{\cMcont}\Wcont\| \geq 
E\left[\|P_{\cMcont}\Wcont\|\right]+\sqrt{
4    \log(K_N) \dimMcont + 2 u} \right\}\leq K_N^{-2\dimMcont}e^{-u}\quad.
\]
Now as $E\left[\|P_{\cMcont}\Wcont\|\right]\leq(E\left[\|P_{\cMcont}\Wcont\|^2\right])^{1/2} = \sqrt{\dimMcont}$, one derives
\[
\Proba\left\{\|P_{\cMcont}\Wcont\| \geq \sqrt{\dimMcont} + 
\sqrt{4 \log(K_N) \dimMcont + 2 u }\right\}\leq K_N^{- 2\dimMcont}e^{-u}\quad.
\]

Now
\begin{align*}
  \Proba\big\{\exists I\subset \setoneKN, 
 \|P_{\cMcont_I}\Wcont\| &\geq
\sqrt{\dimMcontI} + 
\sqrt{4 \log(K_N) \dimMcontI + 2 u }
\big\}
\\
&\leq \sum_{ I \subset \setoneKN}
  \Proba\left\{\|P_{\cMcont_I}\Wcont\| \geq
\sqrt{\dimMcontI} + 
\sqrt{4 \log(K_N) \dimMcontI + 2 u }
\right\}
\\ 
&\leq \sum_{I \subset \setoneKN}  K_N^{-2\dimMcontI }e^{-u}\\
& \leq \sum_{d=1}^{K_N} \binom{K_N}{d} K_N^{-2d}e^{-u}
\leq  \sum_{d=1}^{K_N} K_N^{-d}e^{-u} \\
& \leq \frac{K_N^{-1}}{1-K_N^{-1}}e^{-u}\\
\intertext{and thus}
  \Proba\big\{\exists I\subset \setoneKN, 
 \|P_{\cMcont_I}\Wcont\| &\geq
\sqrt{\dimMcontI} + 
\sqrt{4 \log(K_N) \dimMcontI + 2 u }
\big\}\\
&
  \leq \frac{2}{K_N}e^{-u}
\end{align*}
\end{proof}

The proof of Theorem~\ref{theo:minimizationE} follows from the
definition of the best basis, the oracle subspace and the previous Lemma.
\begin{proof}[Proof of Theorem~\ref{theo:minimizationE}]
Recall, that $P_{\Vcont_N}\Xcont = P_{\Vcont_N}\fcont + \sigma P_{\Vcont_N}\Wcont \in \Vcont_N$
with $P_{\Vcont_N}\Wcont$ a Gaussian white noise.
By construction, the thresholding estimate is 
$P_{\cMcont_{\widehat \cBcontTX}}\Xcont$ 
where
\[
\widehat \cBcont =  \arg \min_{\cBcont \in \cDcont_N} \|  P_{\Vcont_N}\Xcont  - P_{\cMcont_{\cBcontTX}} \Xcont \|^2 + \dimMcontcBcontTX\,T^2 \quad.
\]
To simplify the notation, we denote by $\widehat{\cMcont}$ and
$\dimwidehatMcont$ the corresponding space and its dimension.

Denote now $\dimMcontO$ the dimension of the oracle subspace $\cMcont_O$ that has been defined as the
minimizer of 
\[
\| P_{\Vcont_N}\fcont  - P_{\cMcont} \fcont \|^2 + \dimMcont \,T^2 \quad.
\]

By construction,
\begin{align*}
\| P_{\Vcont_N}\Xcont - P_{\widehat{\cMcont}}\Xcont\|^2+\lambda^2\log(K_N)\sigma^2 \dimwidehatMcont 
&\leq \| P_{\Vcont_N}\Xcont-P_{\cMcont_O}\fcont \|^2 + \lambda^2 \log(K_N)
\sigma^2 \dimMcontO.
\end{align*}
Using 
\[
\|P_{\Vcont_N}\Xcont - P_{\widehat{\cMcont}}\Xcont\|^2
=\|P_{\Vcont_N}\Xcont- P_{\Vcont_N}\fcont\|^2+\|P_{\Vcont_N}\fcont
 - P_{\widehat{\cMcont}}\Xcont\|^2+2\langle P_{\Vcont_N}\Xcont -
 P_{\Vcont_N}\fcont,P_{\Vcont_N}\fcont-P_{\widehat{\cMcont}}\Xcont\rangle
\] and a similar
  equality for $\| P_{\Vcont_N}\Xcont-P_{\cMcont_O}f \|^2$, one
  obtains
\begin{align*}
\begin{split}
\|P_{\Vcont_N}\fcont-P_{\widehat{\cMcont}}\Xcont\|^2+ \lambda^2\log(K_N)\sigma^2  \dimwidehatMcont  
&\leq \|P_{\Vcont_N}\fcont-P_{\cMcont_O}\fcont\|^2+\lambda^2\log(K_N)\sigma^2\dimMcontO\\
& \mspace{80mu}+2\langle P_{\Vcont_N}\Xcont- P_{\Vcont_N}\fcont, P_{\widehat{\cMcont}}\Xcont -
P_{\cMcont_O}\fcont
\rangle
\end{split}
\end{align*}

One should now focus on the bound on the scalar product :
\begin{align*}
| 2\langle P_{\Vcont_N}\Xcont- P_{\Vcont_N}\fcont&, P_{\widehat{\cMcont}}\Xcont -
P_{\cMcont_O}\fcont
\rangle|\\
& = |2 \langle\sigma
P_{\widehat{\cMcont} + \cMcont_O}\Wcont , P_{\widehat{\cMcont}}\Xcont -
P_{\cMcont_O}\fcont\rangle|\\
& \leq 2 \sigma \|P_{\widehat{\cMcont} + \cMcont_O}\Wcont\| 
(\|P_{\widehat{\cMcont}}\Xcont - P_{\Vcont_N}\fcont\| +\| P_{\Vcont_N}\fcont -
P_{\cMcont_O}\fcont\|)
 \\
\intertext{and, using Lemma~\ref{lem:concentration}, with a probability
  greater than or equal to $1-\frac{2}{K_N}e^{-u}$}
  & \leq 2 \sigma \left( \sqrt{ \dimwidehatMcont  +  \dimMcontO  } + 
  \sqrt{4 \log(K_N) ( \dimwidehatMcont  +  \dimMcontO  ) + 2 u }\right)\\
&\mspace{40mu}\times
(\|P_{\widehat{\cMcont}}\Xcont - P_{\Vcont_N}\fcont\| +\| P_{\Vcont_N}\fcont -
P_{\cMcont_O}\fcont\|)
\\
 \intertext{applying $2xy\leq \beta^{-2} x^2 + \beta^{2} y^2$
   successively with $\beta=\frac{1}{2}$ and $\beta=1$ leads to}
| 2\langle P_{\Vcont_N}\Xcont- P_{\Vcont_N}\fcont&, P_{\widehat{\cMcont}}\Xcont -
P_{\cMcont_O}\fcont
\rangle|\\
 & \leq 
 \left(\frac{1}{2}\right)^{-2} 2  \sigma^2 (  \dimwidehatMcont  +  \dimMcontO  + 4 \log(K_N) (  \dimwidehatMcont  +  \dimMcontO )
  + 2 u  )\\
&\mspace{40mu}
+ \left(\frac{1}{2}\right)^2 2 ( \|P_{\widehat{\cMcont}}\Xcont - P_{\Vcont_N}\fcont\|^2 +\| P_{\Vcont_N}\fcont -
P_{\cMcont_O}\fcont\|^2)
 \quad.
\end{align*}

Inserting this bound into
\begin{align*}
\begin{split}
\|P_{\Vcont_N}\fcont-P_{\widehat{\cMcont}}\Xcont\|^2+ \lambda^2\log(K_N)\sigma^2  \dimwidehatMcont  
&\leq
\|P_{\Vcont_N}\fcont-P_{\cMcont_O}\fcont\|^2+\lambda^2\log(K_N)\sigma^2
\dimMcontO 
\\
& \mspace{80mu}
+|2\langle P_{\Vcont_N}\Xcont- P_{\Vcont_N}\fcont, P_{\widehat{\cMcont}}\Xcont -
P_{\cMcont_O}\fcont
\rangle|
\end{split}
\end{align*}
yields
\begin{align*}
\frac{1}{2} \|P_{\Vcont_N}\fcont - P_{\widehat{\cMcont}}\Xcont \|^2
& \leq \frac{3}{2} \| P_{\Vcont_N}\fcont - P_{\cMcont_O}\fcont\|^2
+ \sigma^2 ( \lambda^2 \log(K_N) + 8 ( 1 + 4 \log(K_N) ))  \dimMcontO \\
& \mspace{40mu} + \sigma^2 ( 8 ( 1 + 4 \log(K_N) ) - \lambda^2
\log(K_N))  \dimwidehatMcont 
+ 16 \sigma^2 u 
\end{align*}
So that if $\lambda^2 \geq 32 + \frac{8}{\log(K_N)}$
\begin{align*}
\|P_{\Vcont_N}\fcont - P_{\widehat{\cMcont}}\Xcont \|^2
& \leq 3 \| P_{\Vcont_N}\fcont - P_{\cMcont_O}\fcont\|^2
+ 4 \sigma^2  \lambda^2 \log(K_N)   \dimMcontO 
+ 32 \sigma^2 u 
\end{align*}
which implies
\begin{align*}
\|P_{\Vcont_N}\fcont - P_{\widehat{\cMcont}}\Xcont \|^2
& \leq 4 ( \| P_{\Vcont_N}\fcont - P_{\cMcont_O}\fcont\|^2 + \sigma^2  \lambda^2 \log(K_N)   \dimMcontO ) + 32 \sigma^2 u 
\end{align*}
where this result holds with probability greater than or equal to $1-\frac{2}{K_N}e^{-u}$.

Recalling that this is valid for all $u\geq 0$, one has
\begin{align*}
\Proba\left\{ \|P_{\Vcont_N}\fcont - P_{\widehat{\cMcont}}\Xcont \|^2
 - 4 ( \| P_{\Vcont_N}\fcont - P_{\cMcont_O}\fcont\|^2 + \sigma^2  \lambda^2 \log(K_N)   \dimMcontO )
\geq  32 \sigma^2 u \right\} \leq \frac{2}{K_N}e^{-u}
 \end{align*}
which implies by integration over $u$
\[
E\left[
\|P_{\Vcont_N}\fcont - P_{\widehat{\cMcont}}\Xcont \|^2
- 4 ( \| P_{\Vcont_N}\fcont - P_{\cMcont_O}\fcont\|^2 + \sigma^2  \lambda^2 \log(K_N)   \dimMcontO )
\right] \leq 32 \sigma^2 \frac{2}{K_N} 
\]
that is the bound of Theorem~\ref{theo:minimizationE}
\[
E\left[
\|P_{\Vcont_N}\fcont - P_{\widehat{\cMcont}}\Xcont \|^2
\right] \leq 4 ( \|P_{\Vcont_N} \fcont - P_{\cMcont_O}\fcont\|^2 + \sigma^2
\lambda^2 \log(K_N)   \dimMcontO ) + 32 \sigma^2 \frac{2}{K_N} 
\]
up to $\|\fcont-P_{\Vcont_N}\fcont\|^2$ which can be added on both
size of the inequality.
\end{proof}

\section*{Bibliography}

\bibliographystyle{plainnat}

\bibliography{Estim}

\begin{thebibliography}{22}
\providecommand{\natexlab}[1]{#1}
\providecommand{\url}[1]{\texttt{#1}}
\expandafter\ifx\csname urlstyle\endcsname\relax
  \providecommand{\doi}[1]{doi: #1}\else
  \providecommand{\doi}{doi: \begingroup \urlstyle{rm}\Url}\fi

\bibitem[Alpert(1992)]{alpert-discrete}
B.~K. Alpert.
\newblock \emph{Wavelets and Other Bases for Fast Numerical Linear Algebra},
  pages 181--216.
\newblock C. K. Chui, editor, Academic Press, San Diego, CA, USA, 1992.

\bibitem[Barron et~al.(1999)Barron, Birg\'e, and Massart]{barron-risk-bounds}
A.~Barron, L.~Birg\'e, and P.~Massart.
\newblock Risk bounds for model selection via penalization.
\newblock \emph{Probab. Th. Rel. Fields}, 113:\penalty0 301--413, 1999.

\bibitem[Birgé and Massart(1995)]{BirgeMassart94lecam}
L.~Birgé and P.~Massart.
\newblock From model selection to adaptive estimation.
\newblock In D.~Pollard, E.~Torgersen, and G.~L. Yang, editors, \emph{A
  Festschrift for Lucien Le~Cam}, pages 55--87. Springer, New York, 1995.

\bibitem[Candès(2006)]{Candes}
E.~J. Candès.
\newblock Modern statistical estimation via oracle inequalities.
\newblock \emph{Acta Numerica}, 2006.

\bibitem[Candès and Donoho(1999)]{CurvSurprisingly}
E.~J. Candès and D.~L. Donoho.
\newblock A surprinsingly effective nonadaptive representation for objects with
  edges.
\newblock \emph{Curves and Surfaces}, 1999.

\bibitem[Coifman and Donoho(1995)]{coifman95:_trans}
R.~R. Coifman and D.~L. Donoho.
\newblock Translation-invariant denoising.
\newblock In A.~Antoniadis and G.~Oppenheim, editors, \emph{Wavelet and
  Statistics}, Lecture Notes in Statistics. Springer Verlag, Berlin, 1995.

\bibitem[Coifman and Meyer(1991)]{Coifman91remarques}
R.~R. Coifman and Y.~Meyer.
\newblock Remarques sur l'analyse de {F}ourier \`a fen\^etre.
\newblock \emph{C. R. Acad. Sci. Paris S\'er. I Math.}, 312\penalty0
  (3):\penalty0 259--261, 1991.
\newblock ISSN 0764-4442.

\bibitem[Coifman and Wickerhauser(1992)]{CoifmanWickerhauser}
R.~R. Coifman and M.~V. Wickerhauser.
\newblock Entropy-based algorithms for best basis selection.
\newblock \emph{IEEE Transactions on Information Theory}, 38\penalty0
  (2):\penalty0 713--718, 1992.

\bibitem[Donoho(1997)]{donoho-CART}
D.~L. Donoho.
\newblock Cart and best-ortho-basis: A connection.
\newblock \emph{Ann. Statist.}, pages 1870--1911, 1997.

\bibitem[Donoho(1999)]{donoho-wedglets}
D.~L. Donoho.
\newblock Wedgelets: Nearly-minimax estimation of edges.
\newblock \emph{Ann. Statist}, 27:\penalty0 353--382, 1999.

\bibitem[Donoho and Johnstone(1994{\natexlab{a}})]{donoho-ideal-denoising}
D.~L. Donoho and I.~M. Johnstone.
\newblock Ideal denoising in an orthonormal basis chosen from a library of
  bases.
\newblock \emph{Comptes Rendus de l'Académie des Sciences}, Serie 1\penalty0
  (319):\penalty0 1317--1322, 1994{\natexlab{a}}.

\bibitem[Donoho and Johnstone(1994{\natexlab{b}})]{donoho-shrinkage}
D.~L. Donoho and I.~M. Johnstone.
\newblock Ideal spatial adaptation via wavelet shrinkage.
\newblock \emph{Biometrika}, 81:\penalty0 425--455, Dec 1994{\natexlab{b}}.

\bibitem[Kolaczyk and Nowak(2004)]{Kolacz}
E.~D. Kolaczyk and R.~D. Nowak.
\newblock Multiscale likelihood analysis and complexity penalized estimation.
\newblock \emph{Annals of Statistics}, 32:\penalty0 500--527, 2004.

\bibitem[Korostelev and Tsybakov(1993)]{KorostelevTsybakov}
A.~P. Korostelev and A.~B. Tsybakov.
\newblock \emph{{Minimax Theory of Image Reconstruction}}, volume~82.
\newblock Spinger, 1993.

\bibitem[{Le~Pennec} and Mallat(2004)]{bandlets-ieee}
E~{Le~Pennec} and S.~Mallat.
\newblock Sparse {G}eometrical {I}mage {A}pproximation with {B}andelets.
\newblock \emph{IEEE Transaction on Image Processing}, 14\penalty0
  (4):\penalty0 423--438, 2004.

\bibitem[{Le~Pennec} and Mallat(2005)]{bandlets-siam}
E.~{Le~Pennec} and S.~Mallat.
\newblock Bandlet image approximation and compression.
\newblock \emph{SIAM Multiscale Modeling and Simulation}, 4\penalty0
  (3):\penalty0 992--1039, 2005.

\bibitem[Mallat(2008)]{MallatBook}
S.~Mallat.
\newblock \emph{A Wavelet Tour of Signal Processing, 3rd ed., Third Edition:
  The Sparse Way}.
\newblock Academic Press, 3rd edition, 2008.

\bibitem[Massart(2003)]{massart03:_concen_inequal_model_selec_saint_flour_note%
s}
P.~Massart.
\newblock \emph{Concentration Inequalities and Model Selection (Saint Flour
  Notes)}.
\newblock Springer, 2003.

\bibitem[Peyré and Mallat(2008)]{peyre-bandlets-theo}
G.~Peyré and S.~Mallat.
\newblock Orthogonal bandlets bases for geometric images approximation.
\newblock \emph{Journal of Pure and Applied Mathematics}, 2008.

\bibitem[Peyré et~al.(2007)Peyré, Dossal, Le~Pennec, and
  Mallat]{peyre07:_geomet_estim_orthog_bandl_bases}
G.~Peyré, Ch. Dossal, E.~Le~Pennec, and S.~Mallat.
\newblock Geometric estimation with orthogonal bandlet bases.
\newblock In \emph{Proceedings of SPIE Wavelet XII}, Aug 2007.

\bibitem[Shukla et~al.(2005)Shukla, Dragotti, Do, and Vetterli]{shukla05:_rate}
R.~Shukla, P.L. Dragotti, M.~N. Do, and M.~Vetterli.
\newblock Rate-distortion optimized tree structured compression algorithms for
  piecewise polynomial images.
\newblock \emph{IEEE Trans. on Image Processing}, 14\penalty0 (3):\penalty0
  343--359, March 2005.

\bibitem[Tsirelson et~al.(1976)Tsirelson, Ibragimov, and
  Sudakov]{tsirelson76:_norms_gauss}
B.~S. Tsirelson, I.~A. Ibragimov, and V.~N. Sudakov.
\newblock Norms of gaussian sample functions.
\newblock In \emph{Lecture Notes in Mathematics}, volume 550, pages 20--41.
  Springer, 1976.

\end{thebibliography}

\end{document}